\newcommand{\ie}{\emph{i.e.}}
\newcommand{\eg}{\emph{e.g.}}
\newcommand{\cf}{\emph{cf.}}
\newcommand{\Real}{\mathbb{R}}
\newcommand{\Dom}{\mathsf{D}}
\newcommand{\arctanh}{\mathop{\mathrm{arctanh}}\nolimits}
\newcommand{\eps}{\varepsilon}
\newcommand{\sii}{L^2}
\newcommand{\der}{\mathrm{d}}
\newtheorem{thm}{Theorem}[section] 
\newtheorem{Theorem}[thm]{Theorem}
\newtheorem{Proposition}[thm]{Proposition}
\newtheorem{Corollary}[thm]{Corollary}
\newtheorem{Conjecture}{Conjecture}
\newtheorem{Question}{Question}
\theoremstyle{definition}
\newtheorem{Remark}[thm]{Remark}
\theoremstyle:=definition,remark,plain\do{%
        \expandafter\g@addto@macro\csname th@\theoremstyle\endcsname{%
            \addtolength\thm@preskip\parskip
            }%
        }
\numberwithin{equation}{section}
\def\OMIT#1{}
\definecolor{DarkGreen}{rgb}{0,0.5,0.1} 
\newcommand\soutD{\bgroup\markoverwith
{\textcolor{DarkGreen}{\rule[.5ex]{2pt}{1pt}}}\ULon}
\definecolor{darkblue}{rgb}{0.2,0.2,0.6}
\definecolor{darkgreen}{rgb}{0.2,0.6,0.2}
\newcommand\soutM{\bgroup\markoverwith
{\textcolor{blue}{\rule[.5ex]{2pt}{1pt}}}\ULon}
\newcommand{\Hm}[1]{\leavevmode{\marginpar{\tiny%
$\hbox to 0mm{\hspace*{-0.5mm}$\leftarrow$\hss}%
\vcenter{\vrule depth 0.1mm height 0.1mm width \the\marginparwidth}%
\hbox to
0mm{\hss$\rightarrow$\hspace*{-0.5mm}}$\\\relax\raggedright #1}}}
\def\aa{\alpha}
\def\dR{{\mathbb R}}
\newcommand\Omg{\Omega}
\def\lm{\lambda}
\def\p{\partial}
\def\cU{{\mathcal U}}
\begin{document}
%
\title[Reverse spectral isoperimetric inequality for a triangle]{Reverse isoperimetric inequality for the lowest Robin eigenvalue of a triangle}
\author[D.~Krej\v{c}i\v{r}\'{i}k]{David Krej\v{c}i\v{r}\'{i}k}
\address{(D.~Krej\v{c}i\v{r}\'{i}k) Department of Mathematics\\ Faculty of Nuclear Sciences and Physical
	Engineering\\
	Czech Technical University in Prague\\ Trojanova 13, 120 00, Prague, Czech
	Republic\\ \linebreak
	{E-mail: david.krejcirik@fjfi.cvut.cz}}
\author[V.~Lotoreichik]{Vladimir Lotoreichik}
\address{(V.~Lotoreichik)
	Department of Theoretical Physics\\
	Nuclear Physics Institute, Czech Academy of Sciences, 
	25068 \v{R}e\v{z}, Czech Republic\\
	E-mail: {lotoreichik@ujf.cas.cz }
}

\author[T.~Vu]{Tuyen Vu}
\address{(T.~Vu) Department of Mathematics\\ Faculty of Nuclear Sciences and Physical
	Engineering\\
	Czech Technical University in Prague\\ Trojanova 13, 120 00, Prague, Czech
	Republic\\
	E-mail: {thibichtuyen.vu@fjfi.cvut.cz}}
\date{23 February 2023}
\begin{abstract}
We consider the Laplace operator on a triangle,
subject to attractive Robin boundary conditions.
We prove that the equilateral triangle is a local maximiser of 
the lowest eigenvalue among all triangles of 
a given area provided that the negative boundary parameter
is sufficiently small in absolute value,
with the smallness depending on the area only.
Moreover, using various trial functions, we obtain sufficient conditions 
	for the global optimality of the equilateral triangle under fixed area constraint
	in the regimes of small and large couplings. 
	We also discuss the constraint of fixed perimeter.
\end{abstract} 
\maketitle

\section{Introduction}
%
Given a bounded open connected set $\Omega\subset\Real^d$
of dimension $d\ge2$ and with Lipschitz boundary~$\partial\Omega$,
consider the Robin eigenvalue problem
\begin{equation}\label{bvp}
\left\{
\begin{aligned}
  -\Delta u &= \lambda u 
  & \mbox{in} \quad \Omega \,,
  \\
  \frac{\partial u}{\partial n} + \alpha \;\! u &= 0 
  & \mbox{on} \quad \partial\Omega \,,
\end{aligned}
\right.
\end{equation}
where~$\alpha$ is a real parameter and~$n$ is the outward unit normal of~$\Omega$. 
Let us arrange the corresponding eigenvalues in a non-decreasing sequence by
$\{\lambda_k^\alpha(\Omega)\}_{k=1}^\infty$,
where each eigenvalue is repeated according to its multiplicity.
For the lowest eigenvalue, one has the variational characterisation
\begin{equation}\label{variational}
  \lambda_1^\alpha(\Omega) =
  \inf_{\stackrel[u\not=0]{}{u \in H^1(\Omega)}}
  \frac{\displaystyle
  \int_\Omega |\nabla u|^2 + \alpha \int_{\partial\Omega} |u|^2}
  {\displaystyle
  \int_\Omega |u|^2}
  \,,
\end{equation}
where the boundary integral is understood in the sense of traces.
By convention, we also include the Dirichlet case $\alpha=+\infty$,
where the space of test functions is $H_0^1(\Omega)$
and the boundary integral is not present.

The present paper is primordially motivated 
by the following broad question
about the validity of spectral isoperimetric inequalities
for the Robin Laplacian.

\begin{Question}\label{Conj}
For every $\alpha \in (-\infty,+\infty]$, 
does one have
\begin{equation}\label{Conj.eq}
  \frac{\lambda_1^\alpha(\Omega)}{\lambda_1^\alpha(\Omega^*)} \geq 1
  \,,
\end{equation}
where~$\Omega^*$ is the ball of the same volume as~$\Omega$?
\end{Question}

Since $\lambda_1^0(\Omega) = 0$ for any domain~$\Omega$,
the Neumann case $\alpha = 0$ should be understood 
through the limit $\alpha \to 0$. 
It is easy to verify the asymptotics
$
  \lambda_1^\alpha(\Omega) = \alpha |\partial\Omega| / |\Omega|
  + O(\alpha^2)
$
as $\alpha \to 0$, 
where~$|\Omega|$ and $|\partial\Omega|$ denote 
the volume of~$\Omega$ and the $(d-1)$-dimensional 
Hausdorff measure of $\partial\Omega$, respectively.
Then the inequality~\eqref{Conj.eq} for $\alpha=0$ can be interpreted
as the classical (purely geometric) isoperimetric inequality
$|\partial\Omega| \geq |\partial\Omega^*|$
whose validity is well known.
Indeed, the ball has the smallest surface area
among all domains of the same volume. 

The Dirichlet case $\alpha=+\infty$ is also well known
and customarily referred to as the Faber--Krahn inequality.
It was conjectured by Lord Rayleigh in his celebrated monograph~\cite{R}
and rigorously proved fifty years later in~\cite{4} and~\cite{8}. 
In dimension $d=2$, the usual interpretation is that
among all membranes of equal area and fixed edges,
the circular membrane emits the lowest fundamental tone.

For other values of~$\alpha$, 
the history of Question~\ref{Conj} is much more recent.
The repulsive case $\alpha>0$ is known as 
the Bossel--Daners inequality~\cite{2,3}
(see also \cite{Bucur-Giacomini_2015} for an alternative proof).
Here the classical interpretation in dimension $d=2$
is that among all membranes of equal area and with elastically supported edges
(think about the membrane attached by springs),
the circular membrane emits the lowest fundamental tone.
In all dimensions, we may rely on a quantum-mechanical interpretation
that among all resonators of equal volume
and with a strongly localised positive potential along the boundary, 
the spherical resonator has the smallest ground-state energy.

In summary, the spectral isoperimetric inequality
$\lambda_1^\alpha(\Omega) \geq \lambda_1^\alpha(\Omega^*)$ holds
for all positive~$\alpha$ (including the Dirichlet case).
What happens for negative~$\alpha$?
Since $\lambda_1^\alpha(\Omega)$ is negative if (and only if) $\alpha < 0$,
property~\eqref{Conj.eq} means the reverse inequality
$\lambda_1^\alpha(\Omega) \leq \lambda_1^\alpha(\Omega^*)$ 
for negative~$\alpha$. 
In other words,
the ball plays the role of the \emph{maximiser} in the attractive case,
and so one usually speaks about 
the \emph{reverse} spectral isoperimetric inequality
in this regime.

While springs with a negative force constant
 are perhaps less intuitive,
the quantum-mechanical interpretation is meaningful
for the attractive case $\alpha<0$ as well. 
(There are also alternative interpretations in acoustics~\cite{Morse}
and superconductivity~\cite{GS07}.)
Consider quantum resonators of equal volume
and with a strongly localised negative potential along the boundary.
Is it true that the spherical resonator has the largest ground-state energy?
It turns out that the question of validity of~\eqref{Conj.eq} 
for $\alpha < 0$ constitutes a hot open problem in spectral geometry
known as the Bareket conjecture~\cite{1}
and it still remains open.
In fact, this conjecture does not hold 
for multiply connected domains~\cite{6}
and without convexity assumption in space dimensions $d\ge3$~\cite{FNT16}.
So the open question is precisely whether it holds within
the class of simply connected domains in two dimensions 
\cite[Conj.~2]{AFK}
and the class of convex domains in higher space dimensions
\cite[Conj.~3]{AFK}. 

The fixed volume
is not the only geometric constraint of interest.  The counterpart of~\eqref{Conj.eq} in the attractive case $\alpha < 0$ 
under fixed area of the boundary constraint is settled for $d = 2$ in~\cite{AFK} and for $d\ge3$  in the class of convex domains in~\cite{BFNT} (see also~\cite{V20}). In this setting, it is conjectured that convexity of the domain for $d \ge 3$ can be dropped~\cite[Conj.~4]{AFK}.

Surprisingly, the analogous questions of spectral optimisation
in the exterior of bounded sets are simpler 
and have been resolved recently \cite{KL1,KL2}.
 
Our goal in the present paper is to 
consider a ``discrete'' version of Question~\ref{Conj}
in the sense that we restrict ourselves 
to a special class of planar domains.
Namely, we address the following conjecture, 
which also appears in the list of open problems~\cite[Conj.~1.3]{KLL}.

\begin{Conjecture}\label{Conj.triangle}
Let $\Omega\subset\Real^2$ be any triangle.
For every $\alpha \in (-\infty,0) \cup (0,+\infty)$, one has
\begin{equation}\label{Conj.eq.triangle}
  \frac{\lambda_1^\alpha(\Omega)}{\lambda_1^\alpha(\Omega^*)} \geq 1
  \,,
\end{equation}
where~$\Omega^*$ is the equilateral triangle of the same area as~$\Omega$.  
\end{Conjecture}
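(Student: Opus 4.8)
The plan is to treat the problem as a shape-optimisation over the two-parameter manifold of triangles of fixed area (modulo rigid motions), on which the equilateral triangle $\Omega^*$ is the unique point invariant under the full dihedral symmetry group $D_3$. I would split the analysis according to the two regimes in which $\lambda_1^\alpha$ is governed by a single simple geometric mechanism: small $|\alpha|$, where it is controlled by the perimeter, and large $|\alpha|$, where it is controlled by the corners. The full conjecture over the whole range of $\alpha$ is genuinely hard, so I would first secure these two regimes together with the local behaviour at $\Omega^*$, which is the most one can realistically expect from current techniques.

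For small $|\alpha|$ I would start from the expansion $\lambda_1^\alpha(\Omega) = \alpha\,|\partial\Omega|/|\Omega| + O(\alpha^2)$ recorded in the introduction. At fixed area the leading coefficient $|\partial\Omega|/|\Omega|$ is minimised precisely by the equilateral triangle, by the elementary isoperimetric inequality for triangles; for $\alpha>0$ this already yields $\lambda_1^\alpha(\Omega)\ge\lambda_1^\alpha(\Omega^*)$ at leading order, and for $\alpha<0$ the sign flip produces the reverse inequality, matching \eqref{Conj.eq.triangle} in both cases. To upgrade this to a local statement I would compute the Hessian of $\Omega\mapsto\lambda_1^\alpha(\Omega)$ at $\Omega^*$ on the fixed-area shape manifold via a Hadamard-type first-variation formula. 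The decisive simplification is symmetry: the tangent space at $\Omega^*$ carries the two-dimensional irreducible representation of $D_3$, so the first variation vanishes and, by Schur's lemma, the symmetry-invariant Hessian is a scalar multiple of the identity. Local optimality thus reduces to the sign of a single number, which for small $|\alpha|$ equals the sign of $\alpha$ times the strictly positive-definite Hessian of the perimeter functional, the $O(\alpha^2)$ remainder being negligible. This is exactly the local maximality for small negative $\alpha$ announced in the abstract.

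For large $|\alpha|$ I would exploit the concentration of the ground state: in the attractive case the eigenfunction localises at the sharpest corner, and the leading behaviour is dictated by the model operator on the infinite sector of the corresponding opening angle, whose bottom scales like $-\alpha^2/\sin^2(\theta/2)$. Two-sided bounds—an upper bound from explicit corner-localised trial functions inserted into \eqref{variational}, and a lower bound by bracketing against the sectorial model operators—would show that $\lambda_1^\alpha$ is, to leading order in $\alpha$, governed by the smallest interior angle. Since the equilateral triangle is the unique triangle maximising the minimal angle, with all angles equal to $\pi/3$, it renders this corner contribution least negative, giving $\lambda_1^\alpha(\Omega)\le\lambda_1^\alpha(\Omega^*)$ for $\alpha<0$ and $|\alpha|$ large. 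In the repulsive large-$\alpha$ regime I would instead pass to the Dirichlet limit and invoke the known Faber--Krahn-type optimality of the equilateral triangle.

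The main obstacle is the intermediate range of $\alpha$. The two mechanisms above—perimeter domination for small $|\alpha|$ and corner domination for large $|\alpha|$—favour the equilateral triangle at their respective ends, but there is no single trial function or monotonicity principle bridging the crossover, and the shape dependence of $\lambda_1^\alpha$ is not known to be monotone in $\alpha$ uniformly over the shape manifold. Consequently the honest outcome of this approach is a pair of sufficient conditions, valid for $|\alpha|$ below an area-dependent threshold and above a (generally different) threshold, together with the local maximality at $\Omega^*$; closing the gap to all $\alpha$ and to a genuinely global optimum appears to lie beyond these techniques.
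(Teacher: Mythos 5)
You have correctly identified the status of this statement: it is a \emph{conjecture} which the paper leaves open, and your proposal (rightly) does not claim to prove it either. What the paper actually establishes are exactly the partial results your program targets --- local maximality of the equilateral triangle for small negative $\alpha$ (Theorem~\ref{Thm.local}/\ref{thm:local}), and the attractive inequality $\lambda_1^\alpha(\Omega)\le\lambda_1^\alpha(\Omega^*)$ for $|\alpha|$ small or large with shape-dependent thresholds (Theorem~\ref{Thm.global}, via Theorems~\ref{Thm.small}, \ref{Thm.large} and~\ref{thm:large}) --- together with the same honest admission that the intermediate range of $\alpha$ is out of reach.

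Within that shared program, your route for the local result is genuinely different from the paper's. The paper pulls every triangle $\Omega_{a,c}$ back to the fixed equilateral triangle by an explicit linear diffeomorphism, so that all forms $\hat h_{\alpha,a,c}$ act in one Hilbert space and constitute a self-adjoint holomorphic family of type (B); criticality ($\dot\lambda_0=\lambda_0'=\dot\lambda_0'=0$) and the Hessian bounds are then obtained by explicit differentiation of the weak eigenvalue identity, and the sign condition is settled using the exact hyperbolic-cosine ground state of the equilateral triangle, which yields a quantitative, shape-uniform threshold $\alpha_0(S)\le -0.92/\sqrt{S}$. You instead argue by symmetry: the tangent space to the fixed-area moduli space at $\Omega^*$ is the standard two-dimensional irreducible representation of $D_3$, so the first variation vanishes and the invariant Hessian is scalar by Schur's lemma --- a cleaner conceptual replacement for the paper's explicit verification that all first and mixed second derivatives vanish. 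Two caveats, however. First, extracting the sign of that scalar by differentiating the expansion $\lambda_1^\alpha=\alpha\,|\partial\Omega|/|\Omega|+O(\alpha^2)$ twice in the shape parameters requires the remainder to be $O(\alpha^2)$ in a $C^2$ sense with respect to shape, i.e.\ joint analyticity in $(\alpha,a,c)$; this is precisely what the paper's fixed-domain transformation is built to supply, whereas Hadamard variation formulas taken at face value on a domain with corners are more delicate to justify. Second, your argument produces only a qualitative smallness threshold, while the paper's computation gives an explicit one depending solely on the area. In the asymptotic regimes your mechanisms coincide with the paper's (transplanted equilateral ground state for small $|\alpha|$, corner-localised exponential trial function and the sector bottom $-\alpha^2/\sin^2(\theta_\star/2)$ for large $|\alpha|$); note only that for the direction needed when $\alpha<0$ no sectorial ``bracketing'' lower bound is required, since the equilateral side is explicitly solvable --- this is how the paper arrives at its explicit sufficient condition~\eqref{eq:condition}. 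Your closing remark on the repulsive Dirichlet limit is a sensible extra observation, not pursued in the paper, but it too yields only a threshold depending on the individual triangle.
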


The Dirichlet case $\alpha = +\infty$ is excluded here 
because it can be settled by the Steiner symmetrisation,
see \cite[Sec.~7.4]{Polya-Szego} or \cite[Thm.~3.3.3]{H06}. 
(An analogous conjecture for quadrilaterals 
also holds in the Dirichlet case, but polygons with more sides
still constitute an interesting open problem in spectral geometry,
see \cite{Bucur-preprint} for the latest developments.)
Similarly, the Neumann case $\alpha=0$, 
when the left-hand side of~\eqref{Conj.eq.triangle}
is interpreted as the limit of the quotient as $\alpha \to 0$,
reduces to the well-known purely geometric statement that 
the equilateral triangle has the smallest perimeter among all triangles
of the same area. 
(A relevant spectral-geometric question 
for the Neumann boundary condition is to optimise 
the first non-zero eigenvalue: it is maximised by the equilateral triangle
both for the area or perimeter constraints~\cite{LS}.)
Apart from the special situation of Dirichlet and Neumann boundary conditions,
both cases $\alpha < 0$ and $\alpha > 0$ (finite) 
of Conjecture~\ref{Conj.triangle} remain open. 
In this paper we provide a partial answer for~$\alpha$ negative,
when~\eqref{Conj.eq.triangle} means the reverse spectral
isoperimetric inequality
$\lambda_1^\alpha(\Omega) \leq \lambda_1^\alpha(\Omega^*)$,
\ie~the equilateral triangle is a maximiser.

As the main result, we prove that the equilateral triangle is 
a \emph{local} maximiser provided that~$\alpha$ is negative 
and sufficiently small in absolute value.
That is, \eqref{Conj.eq.triangle} holds asymptotically
in the attractive case,
whenever the triangle~$\Omega$ is close 
to the equilateral triangle~$\Omega^*$ 
in the sense of Hausdorff distance (after possible congruences).
(Note that the Hausdorff distance of \emph{triangles}~$\Omega$ and~$\Omega^*$ 
tends to zero if, and only if, the area $|\Omega \setminus \Omega^*|$
tends to zero.)
\begin{Theorem}\label{Thm.local}
Let~$\Omega^*$ be an equilateral triangle
and let~$\Omega$ denote any triangle of the same area.
There exists a negative number $\alpha_0$ 
depending solely on the fixed area such that, 
for all $\alpha \in [\alpha_0,0)$,
\begin{equation*} 
 	\frac{\lambda_1^\alpha(\Omega)}{\lambda_1^\alpha(\Omega^*)} \geq 1
  \,,
\end{equation*}
provided that $|\Omega\setminus\Omega^*|$ is sufficiently small (with the smallness depending also on the area $|\Omega|=|\Omega^*|$ fixed).
\end{Theorem}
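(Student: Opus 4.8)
The plan is to treat this as a perturbation problem around the equilateral triangle $\Omega^*$, using the smallness of $|\alpha|$ to reduce the optimisation to a geometric inequality that is already understood in the Neumann limit. Recall from the introduction that $\lambda_1^\alpha(\Omega) = \alpha\,|\partial\Omega|/|\Omega| + O(\alpha^2)$ as $\alpha\to0$. Under the fixed-area constraint $|\Omega|=|\Omega^*|$, the leading term is $\alpha\,|\partial\Omega|/|\Omega^*|$, and since $\alpha<0$ this term is \emph{maximised} (least negative) exactly when $|\partial\Omega|$ is minimised. By the classical geometric isoperimetric inequality for triangles, the equilateral triangle uniquely minimises the perimeter among triangles of fixed area, so the first-order term already favours $\Omega^*$. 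The whole difficulty is therefore to control the $O(\alpha^2)$ remainder \emph{uniformly} over all triangles $\Omega$ close to $\Omega^*$, and to make the first-order gain quantitative in terms of the shape deviation.

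\smallskip

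Concretely, I would parametrise triangles of fixed area near $\Omega^*$ by a finite-dimensional shape parameter (the genuine degrees of freedom being, after fixing area and factoring out congruences, essentially two real parameters, e.g.\ deviations of two angles from $\pi/3$), and set $t := |\Omega\setminus\Omega^*|$ as a measure of smallness. The strategy is to establish a two-sided asymptotic expansion
\begin{equation*}
  \lambda_1^\alpha(\Omega)
  = \frac{\alpha\,|\partial\Omega|}{|\Omega^*|}
  + \alpha^2\, C(\Omega) + R(\alpha,\Omega),
\end{equation*}
where $C(\Omega)$ stays bounded and $R(\alpha,\Omega) = o(\alpha^2)$ uniformly for $\Omega$ in a Hausdorff-neighbourhood of $\Omega^*$. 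The key geometric input is a quantitative perimeter deficit: there is a constant $c>0$ with
\begin{equation*}
  |\partial\Omega| - |\partial\Omega^*| \ge c\, t
\end{equation*}
for all triangles of area $|\Omega^*|$ sufficiently close to equilateral (this linear-in-$t$ lower bound follows from a Taylor expansion of the perimeter functional, whose gradient under the area constraint vanishes at the equilateral point but whose Hessian is positive-definite on the shape directions, giving in fact a quadratic gain in the shape parameters, hence at least a linear gain in $t$). Combining these, the difference
\begin{equation*}
  \lambda_1^\alpha(\Omega) - \lambda_1^\alpha(\Omega^*)
  = \frac{\alpha}{|\Omega^*|}\bigl(|\partial\Omega|-|\partial\Omega^*|\bigr)
    + O(\alpha^2)
\end{equation*}
has leading part $\le \frac{\alpha}{|\Omega^*|}\,c\,t < 0$ (since $\alpha<0$), so for $t$ fixed the sign is determined once $|\alpha|$ is small enough that the $O(\alpha^2)$ term is dominated.

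\smallskip

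The main obstacle, and where the argument needs genuine care, is the \emph{uniformity} of the remainder estimate and the interplay between the two small quantities $t$ and $|\alpha|$: one cannot simply send $\alpha\to0$ for each fixed $\Omega$ and hope for a single threshold $\alpha_0$, because the constant in the $O(\alpha^2)$ term could a priori blow up as $\Omega$ degenerates or as the shape parameter varies. The resolution is to prove the expansion with constants depending only on the fixed area, exploiting that the relevant Hausdorff-neighbourhood of $\Omega^*$ consists of non-degenerate triangles with uniformly controlled geometry (inradius, angles bounded away from $0$ and $\pi$, uniform $H^1$-extension and trace constants). I would obtain the remainder bound from the variational characterisation~\eqref{variational}: inserting the constant function as a trial state gives an upper bound on $\lambda_1^\alpha(\Omega)$ of the form $\alpha|\partial\Omega|/|\Omega| + O(\alpha^2)$ with an explicit area-dependent constant, while a matching lower bound comes from a quadratic-form estimate controlling the boundary term by the gradient term via a uniform trace inequality, which keeps the negative eigenvalue close to its first-order prediction. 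Assembling the linear geometric gain against the uniformly controlled quadratic remainder then yields the claimed threshold $\alpha_0<0$ depending only on $|\Omega^*|$, completing the proof.
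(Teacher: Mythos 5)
Your proposal has a genuine gap, and it sits exactly at the point the theorem is designed to address: the order of quantifiers between $\alpha$ and the shape deviation. Writing $t := |\Omega\setminus\Omega^*|$, your final comparison is between the first-order gain $\alpha\bigl(|\partial\Omega|-|\partial\Omega^*|\bigr)/|\Omega^*|$ and a remainder $O(\alpha^2)$ whose constant is (at best) uniform over a neighbourhood of $\Omega^*$ but does \emph{not} vanish as $t\to0$. This yields negativity only when $|\alpha|\lesssim |\partial\Omega|-|\partial\Omega^*|$, i.e.\ a threshold on $\alpha$ that depends on the triangle and degenerates to $0$ as $\Omega\to\Omega^*$. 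That proves the statement ``for each fixed non-equilateral triangle the inequality holds for $|\alpha|$ small enough'' (essentially the easy small-coupling part of Theorem~\ref{Thm.global}), not Theorem~\ref{Thm.local}, which demands a single $\alpha_0<0$ depending only on the area, valid for \emph{all} triangles sufficiently close to $\Omega^*$ (the neighbourhood may depend on $\alpha$, but $\alpha_0$ may not depend on the shape). Your ``resolution'' paragraph addresses a different issue (non-degeneracy of nearby triangles, uniform trace constants); uniformity of the $O(\alpha^2)$ constant cannot repair this inversion, because the gain still vanishes with $t$ while the error does not. A second, concrete error compounds the problem: the claimed deficit $|\partial\Omega|-|\partial\Omega^*|\ge c\,t$ is false. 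As you yourself note, the equilateral triangle is a critical point of the perimeter under the area constraint with positive-definite Hessian, so the deficit is \emph{quadratic} in the shape parameters; since $t$ is comparable to (not the square of) the vertex displacement, the deficit is $\asymp t^2$, which is strictly weaker than linear in $t$ for small $t$ and makes the comparison above even worse.

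What is actually needed --- and what the paper proves --- is that the entire eigenvalue difference, not just its first order in $\alpha$, is quadratic in the shape parameters with a \emph{negative} coefficient for all $\alpha\in[\alpha_0(S),0)$: equivalently, the shape-gradient of $\lambda_1^\alpha$ vanishes at the equilateral point and the shape-Hessian is negative, uniformly for such $\alpha$. In the language of your expansion, one must show that the differenced second-order term $\alpha^2\bigl(C(\Omega)-C(\Omega^*)\bigr)$ and the differenced remainder are themselves $O(t^2)$ uniformly in $\alpha$; your argument never establishes this, and it is not a consequence of trial-function upper bounds plus trace inequalities. The paper obtains it by transplanting all quadratic forms onto the fixed equilateral triangle (Subsection~\ref{sec:geom}), so that the forms $\hat h_{\alpha,a,c}$ constitute a self-adjoint holomorphic family of type (B) in a single Hilbert space; it then shows $\dot\lambda_0=\lambda_0'=\dot\lambda_0'=0$ and bounds both pure second derivatives by a multiple of $\tfrac13\|\nabla\psi_0\|^2+\tfrac{\alpha}{8}\|\psi_0\|^2_{L^2(\partial\Omega_0)}$ (see~\eqref{second} and~\eqref{second2}), whose negativity for $\alpha\ge -0.92/\sqrt S$ is extracted from the explicitly solvable Robin problem on the equilateral triangle via the transcendental equations~\eqref{implicit}. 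This explicit-ground-state step is precisely what delivers an $\alpha_0$ depending only on $S$, and nothing in your proposal substitutes for it.
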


Moreover, we provide an explicit estimate for~$\alpha_0$.
We also prove that the same local optimality result holds 
under the fixed perimeter constraint. 
We leave as an open problem whether the restriction 
on the smallness of~$|\alpha|$ is necessary for the validity 
of the local maximisation result. 

In order to prove the local optimality of the equilateral triangle,
we show that the first-order and also the second-order mixed derivatives of the lowest eigenvalue with respect to geometric parameters characterising the triangle vanish and we obtain upper bounds on the second-order partial derivatives. The rest of the analysis reduces to identifying conditions on the boundary parameter under which these upper bounds are negative. In this argument, we take the advantage of rewriting the quadratic form of the Robin Laplacian on a general triangle into the quadratic form on an equilateral triangle with the geometry transferred into coefficients in the differential expression and into boundary parameters. Upon such a transform all the quadratic forms are defined in the same Hilbert space and depend analytically on the geometric parameters. In the computation of the derivatives of the lowest eigenvalue we exploit the theory of self-adjoint holomorphic families of type (B) (see~\cite[Chap.~VII]{7}). Another ingredient of the argument is that the Robin eigenvalue problem on an equilateral triangle is explicitly solvable~\cite{5,9}. 

Our next series of results is about 
the \emph{global} validity of Conjecture~\ref{Conj.triangle},
still in the attractive case.
In particular, we establish the conjecture for 
negative~$\alpha$ which is either small or large in absolute value. 
\begin{Theorem}\label{Thm.global}
Let~$\Omega^*$ be an equilateral triangle
and let~$\Omega$ denote any triangle of the same area.
There exist negative numbers $\alpha_1 \leq \alpha_2$ such that,
for all $\alpha \in (-\infty,\alpha_1] \cup [\alpha_2,0)$,
\begin{equation*}  
  \lambda_1^\alpha(\Omega) \leq \lambda_1^\alpha(\Omega^*)
  \,.
\end{equation*}
\end{Theorem}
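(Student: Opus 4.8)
The plan is to bound $\lambda_1^\alpha(\Omega)$ from above by inserting carefully chosen trial functions into the variational characterisation \eqref{variational}, and to compare the resulting bound with $\lambda_1^\alpha(\Omega^*)$, which is accessible because the Robin problem on the equilateral triangle is explicitly solvable. The two admissible ranges of $\alpha$ call for genuinely different test functions, reflecting the two mechanisms that render the equilateral triangle optimal: the \emph{perimeter}, which governs the weak-coupling regime, and the \emph{sharpest corner}, which governs the strong-coupling regime. In both ranges the triangles lying very close to $\Omega^*$ are the delicate case and require a separate argument.

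For the range $[\alpha_2,0)$ I would first test \eqref{variational} on $\Omega$ with the constant function $u\equiv1$, which yields
\begin{equation*}
  \lambda_1^\alpha(\Omega) \leq \alpha\,\frac{|\partial\Omega|}{|\Omega|}
  \,.
\end{equation*}
Since the equilateral triangle uniquely minimises the perimeter among triangles of fixed area, one has $|\partial\Omega|\geq|\partial\Omega^*|$, and this gap must overcome the $O(\alpha^2)$ error. Expanding the known eigenvalue of $\Omega^*$ as $\lambda_1^\alpha(\Omega^*)=\alpha|\partial\Omega^*|/|\Omega^*|-c\,\alpha^2+O(\alpha^3)$ with an explicit $c>0$ (the second-order coefficient is strictly negative because $u\equiv1$ is not an eigenfunction of $\Omega^*$), the desired comparison reduces to $|\alpha|\lesssim|\partial\Omega|-|\partial\Omega^*|$. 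For triangles bounded away from $\Omega^*$ the perimeter surplus is bounded below by a universal constant (by compactness for nondegenerate shapes, and because the perimeter blows up along degenerating ones), which furnishes a uniform threshold; the remaining neighbourhood of $\Omega^*$ is covered directly by Theorem~\ref{Thm.local}. Choosing $\alpha_2$ to be the less negative of the two thresholds closes this regime.

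For the range $(-\infty,\alpha_1]$ I would instead localise. It is classical that the Robin Laplacian on an infinite wedge of opening angle $\gamma$ has ground-state energy $-\alpha^2/\sin^2(\gamma/2)$; planting the corresponding exponentially localised wedge state at the sharpest corner of $\Omega$ and truncating it inside the triangle gives
\begin{equation*}
  \lambda_1^\alpha(\Omega) \leq -\frac{\alpha^2}{\sin^2(\gamma_{\min}/2)} + o(\alpha^2)
  \,,
\end{equation*}
while the solvable $\Omega^*$ obeys $\lambda_1^\alpha(\Omega^*)=-4\alpha^2+o(\alpha^2)$. Because the three angles sum to $\pi$, every non-equilateral triangle has $\gamma_{\min}<\pi/3$, and $\gamma\mapsto-1/\sin^2(\gamma/2)$ is strictly increasing, so the leading coefficient for $\Omega$ is strictly below $-4$. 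Hence for triangles whose smallest angle is bounded away from $\pi/3$ the $-\alpha^2$ gap dominates the remainder once $|\alpha|$ exceeds a uniform threshold, which produces $\alpha_1$.

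The main obstacle, in both regimes but most acutely for strong coupling, is uniformity over triangles arbitrarily close to the equilateral one. In the weak-coupling regime this is handled by Theorem~\ref{Thm.local}, whose second-order analysis is precisely tailored to the degenerate neighbourhood of $\Omega^*$. In the strong-coupling regime no such local result is available, and the single-corner trial function ceases to be sharp: as $\Omega\to\Omega^*$ all three angles approach $\pi/3$, the ground state spreads over all three corners, and the leading gap $\alpha^2\bigl[\sin^{-2}(\gamma_{\min}/2)-4\bigr]$ degenerates faster than the remainder can be controlled. I expect that closing this gap requires reducing the problem near $\Omega^*$ to the lowest eigenvalue of an effective $3\times3$ ``tight-binding'' matrix whose diagonal entries are the three corner energies $-\alpha^2/\sin^2(\gamma_i/2)$ and whose off-diagonal entries are the exponentially small tunnelling amplitudes between corners, and then showing that this lowest eigenvalue, subject to $\gamma_1+\gamma_2+\gamma_3=\pi$, is maximised by the symmetric configuration $\gamma_1=\gamma_2=\gamma_3=\pi/3$. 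Pinning down the sign of the tunnelling term, so that the symmetric combination is the ground state and the trace-free diagonal perturbation can only lower it, is the crux.
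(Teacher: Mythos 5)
Your proposal is correct for the theorem as stated, and its skeleton is the paper's own: a two-regime trial-function argument, with the sector/wedge ground state planted at the sharpest corner for strong coupling (this is exactly the paper's Theorem~\ref{thm:large} and Remark~\ref{rem:large_coupling}, using the same explicit lower bound $\lambda_1^\alpha(\Omega^*)\ge -4\alpha^2-O(|\alpha|)$ coming from the solvable equilateral problem) and a perimeter comparison for weak coupling. The genuine difference is your weak-coupling trial function: you use the constant function together with the expansion of the explicit equilateral eigenvalue, whereas the paper's Theorem~\ref{Thm.small} transplants the equilateral ground state $\psi_0$ onto $\Omega_{a,c}$ via the unitary $U_{a,c}$, and reserves the constant function for eccentric triangles (Theorem~\ref{Thm.large}). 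Your choice buys something concrete: since the perimeter is coercive in $(a,c)$, the constant function gives a threshold uniform over \emph{all} triangles outside any fixed neighbourhood of $\Omega^*$, while the transplanted eigenfunction fails for eccentric triangles (cf.\ Figure~\ref{Fig.test1}) and forces the compact-set restriction $[-M,M]\times[A,B]$ in Theorem~\ref{Thm.small}. A cosmetic point: strict negativity of your second-order coefficient $c$ is asserted but not justified (the variational bound only gives $c\ge 0$); fortunately only a lower remainder bound $\lambda_1^\alpha(\Omega^*)\ge \alpha|\partial\Omega^*|/|\Omega^*|-C\alpha^2$ is actually used, so nothing breaks.

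Two remarks on scope. First, as the paper stresses, the constants $\alpha_1\le\alpha_2$ are allowed to depend on $\Omega$, so your per-triangle estimates already prove the theorem; the entire uniformity apparatus (compactness, the gluing with Theorem~\ref{Thm.local}, and the tight-binding speculation at strong coupling) is addressed to the harder open problem $\alpha_1=\alpha_2$, i.e.\ the full Conjecture~\ref{Conj.triangle}, not to this statement. Second, if you do pursue the uniform weak-coupling version, note a quantifier wrinkle in your gluing: in Theorem~\ref{thm:local} the neighbourhood size $\eps$ depends on $\alpha$, so covering a \emph{fixed} disk around $\Omega^*$ uniformly for all $\alpha\in[\alpha_0,0)$ requires an extra continuity argument in $\alpha$ (for instance for $\lambda_{a,c}(\alpha)/\alpha$ up to $\alpha=0$); the paper's own proof of Theorem~\ref{Thm.small} makes the same move with the same gap left implicit.
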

Here the constants $\alpha_1$ and $\alpha_2$
\emph{a priori} depend on the geometry of~$\Omega$.
In fact, as stated in the present form, this result follows
rather straightforwardly by known eigenvalue asymptotics
for $\alpha \rightarrow 0$ and $\alpha \rightarrow -\infty$ (see~\cite[Chap. 4, Eq. (4.12)]{AH} and ~\cite[Thm 2.3]{11}, respectively).   
However, more uniform and explicit results can be found in the theorems below.
A suitable value for the
constant $\alpha_1$ can be deduced from
the geometric condition~\eqref{eq:condition} in Theorem~\ref{thm:large}. In particular, the constant $\alpha_1$ can be chosen arbitrarily close to zero provided that the smallest angle of the triangle $\Omg$ is sufficiently small.   For the constant $\alpha_2$ we have no explicit condition, but we are able to show in Theorem~\ref{Thm.small} that one can choose a universal value for the constant $\alpha_2$ suitable for a class of triangles, which is, roughly speaking, characterised by the maximal possible degree of the deviation from the equilateral triangle. 
The full proof of Conjecture~\ref{Conj.triangle} 
in the attractive case would require 
to show that $\alpha_1 = \alpha_2$, 
which constitutes an interesting open problem.

Theorem~\ref{Thm.global} and its variants are established 
via a combination of the variational characterisation of 
the lowest eigenvalue and various trial functions.
In particular, for small $|\aa|$, it is sufficient 
to transplant the eigenfunction of the equilateral triangle onto the general triangle via a geometric transformation. 
For large $|\aa|$, a suitable trial function is obtained 
by constant functions
or by a truncation of the eigenfunction of the Robin Laplacian on a sector.
The regions where these trial functions work 
for the proof of Conjecture~\ref{Conj.triangle}
as well as their limitations are analysed numerically. In particular, we check numerically the validity of the inequality in Conjecture~\ref{Conj.triangle} for all $\alpha < 0$ for a class of triangles with high deviation from the equilateral triangle.
 
The paper is organised as follows.
In Section~\ref{sec:prelim} we rigorously define the Robin Laplacian on a general triangle, construct the transform which reduces the spectral problem into an equivalent one on an equilateral triangle,
and provide a characterisation of the lowest eigenvalue 
and the respective eigenfunction of the equilateral triangle.
In Section~\ref{Sec.derivative} 
we compute the derivatives of the lowest eigenvalue 
on a general triangle with respect to the geometric parameters
distinguishing the triangle from the equilateral triangle;
in particular, we establish Theorem~\ref{Thm.local}. 
In Sections~\ref{sec:small} and~\ref{sec:large}
we obtain sufficient conditions for the validity of 
the isoperimetric inequality for small and large~$|\aa|$, respectively;
in particular, we establish Theorem~\ref{Thm.global}.

\section{Preliminaries}\label{sec:prelim}
\subsection{The Robin Laplacian on a triangle}
We understand the Robin eigenvalue problem~\eqref{bvp}  
on a general Lipschitz domain $\Omega\subset\dR^d$ with $d\ge2$
as the spectral problem in the Hilbert space $\sii(\Omega)$
for the self-adjoint operator
\begin{equation}\label{operator}
H_\alpha u := -\Delta u \,, \qquad
\Dom(H_\alpha) := \left\{u \in H^1(\Omega): \ \Delta u \in L^2(\Omega)
\quad \& \quad
\frac{\partial u}{\partial n} + \alpha \;\! u = 0 
\quad \mbox{on} \quad \partial\Omega
\right\} 
.
\end{equation}
Here the normal derivative should be viewed 
as a distribution in the Sobolev space $H^{-1/2}(\partial\Omega)$.
The operator~$H_\alpha$ is associated with a
closed, densely defined, symmetric and semi-bounded quadratic form 
in $L^2(\Omg)$ given by
\begin{equation}\label{eq:form}
h_\alpha[u] := \int_\Omega |\nabla u|^2 + \alpha \int_{\partial\Omega} |u|^2
\,, \qquad
\Dom(h_\alpha) := H^1(\Omega)
\,.
\end{equation}
Here the value of~$u$ in the boundary integral 
is regarded as the trace of the function $u \in H^1(\Omega)$.

\subsection{Geometric setting} \label{sec:geom}
In this subsection we reduce the spectral problem for the Robin Laplacian on a general triangle to the spectral problem for a certain second-order differential expression with constant coefficients on an equilateral triangle, in which the geometric parameters enter into the coefficients of the differential expression and into the boundary parameters on the edges of the triangle. In order to perform this transformation we construct a suitable unitary transform.

We parameterise general triangles which have the fixed area $S>0$ 
by parameters $a \in \Real$ and $c > 0$.
Let~$\Omega_{a,c}$ be the triangle with the vertices
$(-c,0)$, $(c,0)$ and $(a,b)$ 
with $b := \tfrac{S}{c}$. 
The special choice $a_0:=0$ and $c_0:=\sqrt{\tfrac{S}{\sqrt{3}}}$
leads to the equilateral triangle $\Omega_0 := \Omega_{a_0,c_0}$.
The area and the perimeter of the triangles $\Omg_{a,c}$ and $\Omg_0$ are given by
\begin{equation}\label{explicit}
\begin{aligned}
  |\Omega_{a,c}| &= b c=S = |\Omega_0|
  \qquad \mbox{(independent of~$a,c$)} \,,
  \\
  |\partial\Omega_{a,c}| &= 
  \left( 2c + \sqrt{\frac{S^2}{c^2}+(a-c)^2} + \sqrt{\frac{S^2}{c^2}+(a+c)^2} \right) 
   \,,
  \qquad
  |\partial\Omega_0| = 6 \mbox{$\sqrt{\frac{S}{\sqrt{3}}}$}
  \,.
\end{aligned}
\end{equation}

The boundary of~$\Omega_{a,c}$ consists of three sides
\begin{equation*}
  \partial\Omega_{a,c} = \Gamma_{a,c}^{(0)} \cup \Gamma_{a,c}^{(1)} \cup \Gamma_{a,c}^{(2)}
  \,,
\end{equation*}
where ($a \not= \pm c$)
\begin{equation*}
\begin{aligned}
  \Gamma_{a,c}^{(0)} &:= \left\{ (x,0) : \ 
  |x| \leq \mbox{$\sqrt{\frac{S}{\sqrt{3}}}$} \right\}
  , \\
  \Gamma_{a,c}^{(1)} &:= \left\{ 
  \left(x, \frac{S}{c(a+c)} x+\frac{S}{a+c} \right) : \ 
  x \in [\min\{-c,a\},\max\{-c,a\}] \right\}
  , \\
  \Gamma_{a,c}^{(2)} &:= \left\{ 
  \left(x,\frac{S}{c(a-c)} x+\frac{S}{c-a} \right) : \ 
  x \in [\min\{c,a\},\max\{c,a\}] \right\}
  .
\end{aligned}
\end{equation*}
The cases $a = \pm c$ 
correspond to right-angled triangles and
require a separate consideration; we will comment more on them in Remark~\ref{rem:a=pmc} below.
Consequently, given any function $u \in H^1(\Omega_{a,c})$, 
one has ($a \not= \pm c$)
\begin{equation*}
\begin{aligned}
  \|u\|_{\sii(\Gamma_{a,c}^{(0)})}^2
  &= \int_{-c}^{c} |u(x,0)|^2 \, \der x
  \,,
  \\
  \|u\|_{\sii(\Gamma_{a,c}^{(1)})}^2
  &= \frac{\sqrt{\frac{S^2}{c^2}+(a+c)^2}}{a+c}
  \int_{-c}^{a} 
  \left|u\left(x,\frac{S}{c(a+c)} x+\frac{S}{a+c} \right)\right|^2
  \, \der x
  \,,
  \\
  \|u\|_{\sii(\Gamma_{a,c}^{(2)})}^2
  &= \frac{\sqrt{\frac{S^2}{c^2}+(a-c)^2}}{c-a}
  \int_{a}^{c} 
  \left|u\left(x,\frac{S}{c(a-c)} x+\frac{S}{c-a} \right)\right|^2
  \, \der x
  \,.
\end{aligned}
\end{equation*}
%
Using Fubini's theorem, these parameterisations can also be used 
to compute $\|u\|_{\sii(\Omega_{a,c})}$.
Alternatively,
\begin{equation*}
  \|u\|_{\sii(\Omega_{a,c})}^2
  = \int_0^{\frac{S}{c}} 
  \int_{\frac{c(a+c)}{S} y -c}
  ^{\frac{c(a-c)}{S} y +c}
  |u(x,y)|^2 \, \der x \, \der y \,.
\end{equation*}

An identification of the triangles~$\Omega_{a,c}$ and~$\Omega_0$
is obtained by the diffeomorphism
\begin{equation*}
  \mathscr{L}_{a,c}: \Omega_0 \to \Omega_{a,c}:
  \left\{
  (x,y) \mapsto \left(
  \mbox{$\frac{c}{\sqrt{\frac{S}{{\sqrt{3}}}}}$} x+\mbox{$\frac{a}{\sqrt{\sqrt{3}S}}$} y,\mbox{$\frac{\sqrt{\frac{S}{\sqrt{3}}}}{c}$} y 
  \right)
  \right\}
  \,.
\end{equation*}
The corresponding metric reads
\begin{equation*}
  G_{a,c} 
  := \nabla\mathscr{L}_{a,c} \cdot (\nabla\mathscr{L}_{a,c})^T =
  \begin{pmatrix}
    \frac{c^2}{\frac{S}{\sqrt{3}}} & \frac{ac}{S} \\
    \frac{ac}{S} & \frac{a^2}{\sqrt{3}S}+\frac{S}{\sqrt{3}c^2}
  \end{pmatrix}
  \,,
  \qquad
  |G_{a,c}| := \det(G_{a,c}) = 1
  \,, 
\end{equation*}
where the dot~$\cdot$ stands for the matrix multiplication.
The inverse metric reads
\begin{equation*}
  G_{a,c}^{-1} =
  \begin{pmatrix}
    \frac{a^2c^2+S^2}{\sqrt{3}c^2S} & -\frac{ac}{S} \\
    -\frac{ac}{S} & \frac{\sqrt{3}c^2}{S}
  \end{pmatrix}
  \,.
\end{equation*}

The diffeomorphism~$\mathscr{L}_{a,c}$ induces the unitary map
\begin{equation*}
  U_{a,c} : \sii(\Omega_{a,c}) \to \sii(\Omega_0) :
  \left\{ 
  u \mapsto u \circ \mathscr{L}_{a,c}
  \right\}
  \,.
\end{equation*}
We set $\hat{H}_{\alpha,a,c} := U_{a,c} H_\alpha U_{a,c}^{-1}$,
where $H_\alpha$ is the operator~\eqref{operator}
for $\Omega = \Omega_{a,c}$.
Taking $u \in H^1(\Omega_{a,c})$ 
and denoting $\psi:=u \circ \mathscr{L}_{a,c} \in H^1(\Omega_0)$,
one has
\begin{equation*}
\begin{aligned}
  \|u\|_{\sii(\Omega_{a,c})}^2 &= \|\psi\|_{\sii(\Omega_0)}^2 
  \,,
  \\
  \|\nabla u\|_{\sii(\Omega_{a,c})}^2 &=
  \tfrac{S}{\sqrt{3}c^2}
  \left\|\partial_1\psi\right\|_{\sii(\Omega_0)}^2 + 
  \left\|
  c\sqrt{\tfrac{\sqrt{3}}{S}}\partial_2\psi - \tfrac{a}{\sqrt{\sqrt{3}S}} \partial_1\psi
  \right\|_{\sii(\Omega_0)}^2
  \,,
  \\
  \|u\|_{\sii(\Gamma_{a,c}^{(0)})}^2
  &= \int_{-{ c}}^{{ c}}
  |{ u}(x,0)|^2 \, \der x
  = \mbox{$\frac{c}{\sqrt{\frac{S}{\sqrt{3}}}}$}\|\psi\|_{\sii(\Gamma_0^{(0)})}^2
  \,,
  \\
  \|u\|_{\sii(\Gamma_{a,c}^{(1)})}^2
  &= \frac{\sqrt{\frac{S^2}{c^2}+(a+c)^2}}{a+c}
  \int_{-c}^{a} 
  \left|u\left(x,\frac{S}{c(a+c)} x+\frac{S}{a+c} \right)\right|^2
  \, \der x
  =\mbox{$ \sqrt{\frac{\sqrt{3}}{S}}$} \frac{\sqrt{c^2(a+c)^2+S^2}}{2c} \|\psi\|_{\sii(\Gamma_0^{(1)})}^2
  \,,
  \\
  \|u\|_{\sii(\Gamma_{a,c}^{(2)})}^2
  &= \frac{\sqrt{\frac{S^2}{c^2}+(a-c)^2}}{c-a}
  \int_{a}^{c} 
  \left|u\left(x,\frac{S}{c(a-c)} x+\frac{S}{c-a} \right)\right|^2
  \, \der x
  = \mbox{$ \sqrt{\frac{\sqrt{3}}{S}}$} \frac{\sqrt{c^2(a-c)^2+S^2}}{2c} \|\psi\|_{\sii(\Gamma_0^{(2)})}^2
  \,,
\end{aligned}
\end{equation*}
where we used the abbreviation $\Gamma_0^{(k)} := \Gamma_{a_0,c_0}^{(k)}$.
Consequently, $\hat{H}_{\alpha,a,c}$~is the operator in
the ($a,c$)-independent Hilbert space $\sii(\Omega_0)$
associated with the quadratic form
$\hat{h}_{\alpha,a,c}[\psi] := h_\alpha[U_{a,c}^{-1}\psi]$
with domain $\Dom(\hat{h}_{\alpha,a,c}) := U_{a,c}( \Dom(h_\alpha))$. 
One has
\begin{equation}\label{form.hat}
\begin{aligned}
  \hat{h}_{\alpha,a,c}[\psi] &= \  \tfrac{S}{\sqrt{3} c^2}\|\partial_1\psi\|_{\sii(\Omega_0)}^2 + 
  \big\|
 c\sqrt{\tfrac{\sqrt{3}}{S}}\partial_2\psi - \tfrac{a}{\sqrt{\sqrt{3}S}} \partial_1\psi
  \big\|_{\sii(\Omega_0)}^2
  \\ 
  & + \alpha \, \tfrac{c}{\sqrt{\frac{S}{\sqrt{3}}}}\|\psi\|_{\sii(\Gamma_0^{(0)})}^2
  + \alpha \, \sqrt{\tfrac{\sqrt{3}}{S}} \frac{\sqrt{c^2(a+c)^2+S^2}}{2c} \|\psi\|_{\sii(\Gamma_0^{(1)})}^2	
  + \alpha \, \sqrt{\tfrac{\sqrt{3}}{S}} \frac{\sqrt{c^2(a-c)^2+S^2}}{2c} \|\psi\|_{\sii(\Gamma_0^{(2)})}^2, \, \\
  \mathsf{D}(\hat{h}_{\alpha,a,c})  &
   = H^1(\Omega_0)\ .
\end{aligned}
\end{equation}
Notice that, in particular, 
$\hat{h}_{\alpha,a_0,c_0}$ coincides with the quadratic form $h_\alpha$ in~\eqref{eq:form} for $\Omega = \Omega_0$.

\begin{Remark}\label{rem:a=pmc}
The formula \eqref{form.hat} remains unchanged in the situations $a=\pm c$,
which correspond to the right-angled triangle, even
though in this case $\Gamma_{a,c}^{(2)}$ for $a =c$ 
(respectively, $\Gamma_{a,c}^{(1)}$ for $a=-c$) 
must be parameterised differently. 
\end{Remark}

The reduction in this subsection is valid for all $\aa\in\dR$. However, in the following we consider the attractive case  $\alpha < 0$ only.

\subsection{The equilateral triangle}\label{ssec:equilateral}
%
In this subsection we provide a preliminary analysis for the Robin Laplacian 
on an equilateral triangle for $\aa < 0$. The lowest eigenvalue 
and the respective eigenfunction in this setting can be computed explicitly. 
In particular, the ground state is
expressed in terms of hyperbolic cosines and the respective eigenvalue is expressed in terms of solutions of a system of two transcendental equations involving the area of the triangle and the boundary parameter. The details on this analysis can be found in~\cite{9}, see also~\cite{5}.
We provide also additional computations that will be essential in the proof of the main result.

The operator $\hat{H}_{\alpha, a_0,c_0}$ is just the Robin Laplacian
on the equilateral triangle $\Omega_0 = \Omega_{a_0,c_0}$. 
The eigenfunction corresponding to its lowest eigenvalue 
$\lambda_1^\alpha(\Omega_0)$ reads
\begin{equation}\label{eq:ef}
  u_0(x,y)\! := \!
  \cosh\left(L\! -\! 2 \frac{ (L - M) y}{ \sqrt{\sqrt{3}S}}\right) 
  \!+\! 2 \cosh\left((M - L)\left(1\! -\! \frac{y}{\sqrt{\sqrt{3}S}}\right) \!+\! L\right)
  \cosh\left(\frac{(M - L) \sqrt{3} x}{\sqrt{\sqrt{3}S}} \right)
  \,.
\end{equation}
Here the numbers $L,M$ are to be determined by the equations
\begin{equation}\label{implicit}
\begin{aligned}
  2 \, (L-M) \tanh L &= -\alpha \, \mbox{$\sqrt{\sqrt{3}S}$} \, ,
  \\
  (M-L) \tanh M &= -\alpha \, \mbox{$\sqrt{\sqrt{3}S}$} \, .
\end{aligned}
\end{equation}
We remark that the eigenfunction $u_0$ is, in general, not normalised.
The eigenvalue $\lambda_0(\alpha) := \lambda_1^\alpha(\Omega_0)$ satisfies
\begin{equation}\label{ev0}
  \lambda_0(\alpha) = \frac{-4(M-L)^2}{\sqrt{3}S} \,.	
\end{equation}

Dividing the two equations of~\eqref{implicit}, 
we arrive at the identity
\begin{equation}
  \tanh M + 2 \tanh L = 0 \,.
\end{equation}
Expressing
$
  L = - \arctanh\big(\mbox{$\frac{1}{2}$} \tanh M \big)
$,
we convert~\eqref{implicit} into a unique implicit equation
\begin{equation}\label{eq:M}
  \left[ M + \arctanh\big(\mbox{$\frac{1}{2}$} \tanh M \big) \right]
  \tanh M = -\alpha\mbox{$\sqrt{\sqrt{3}S}$} \,.
\end{equation}
We introduce the notation
\[
	K: = M-L > 0.
\]	
Hence, \eqref{ev0}~reads 
\begin{equation}\label{lambda0}
  \lambda_0(\alpha) = \frac{-4K^2}{\sqrt{3}S} \,,	
\end{equation}
where $K, \alpha$ satisfy the following implicit equation
\begin{equation}\label{implicit_function}
   F(K, \alpha): = K + \arctanh\left(\mbox{$\sqrt{\sqrt{3}S}$}\dfrac{\alpha}{K}\right)+ \arctanh\left(\mbox{$\sqrt{\sqrt{3}S}$}\dfrac{\alpha}{2K}\right) =0. 
\end{equation} 

Let us further introduce the parameter
\[
	t:= \dfrac{-\alpha \, \mbox{$\sqrt{\sqrt{3}S}$}}{K}.
\]
From the second equation in~\eqref{implicit}
it is clear that $t\in(0,1)$.
By implicit derivative formula applied to~\eqref{implicit_function}, 
we find
\begin{equation*}
  \frac{\alpha}{K} K_\alpha' =  
  	\frac{\frac{t}{1-t^2} +\frac{t}{2(1-\frac{t^2}{4})}}{K+\frac{t}{1-t^2} +\frac{t}{2(1-\frac{t^2}{4})}} \, .
\end{equation*}  
Here we use the notation $f_x'$ for a partial derivative 
of a function~$f$ with respect to~$x$.
Introducing the auxiliary functions 
\[
	A(t) := \frac{t}{1-t^2}+ \frac{t}{2(1-\frac{t^2}{4})} >0
	\qquad\text{and}\qquad k(\alpha) := \dfrac{\alpha}{K} K_\alpha', 
\]	 
we find several identities which will be useful in what follows:
\begin{equation}\label{alpha}
\begin{aligned}
 k&= \frac{A}{K+A},\\
 K &= \arctanh t+ \arctanh\dfrac{t}{2},\\ \alpha & = -\mbox{$\frac{1}{\sqrt{\sqrt{3}S}}$} \, t\left(\arctanh t+ \arctanh\dfrac{t}{2}\right).
\end{aligned} 
\end{equation}

Finally, we analyse the dependence of $\lm_0(\aa)$ on the area of the triangle.
\begin{Proposition}\label{prop:monotone_S}
	The lowest Robin eigenvalue of the equilateral triangle $\lm_0(\aa)$ is a (strictly) increasing function of the area $S$.
\end{Proposition}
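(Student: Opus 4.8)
The plan is to eliminate the auxiliary quantity $K$ in favour of the parameter $t$ and thereby reduce the statement to the monotone dependence of $t$ on the area $S$. Combining the definition $t = -\alpha\sqrt{\sqrt{3}S}/K$ with formula~\eqref{lambda0}, and inserting $K = -\alpha\sqrt{\sqrt{3}S}/t$, I obtain the remarkably simple expression
\begin{equation*}
  \lambda_0(\alpha) = -\frac{4K^2}{\sqrt{3}S} = -\frac{4\alpha^2}{t^2} \,,
\end{equation*}
where I used $(\sqrt{\sqrt{3}S})^2 = \sqrt{3}S$. Since $\alpha < 0$ is kept fixed, it therefore suffices to prove that $t = t(S) \in (0,1)$ is a strictly increasing function of $S$; then $-4\alpha^2/t^2$ is strictly increasing in $S$ as well, because $s \mapsto -1/s^2$ is strictly increasing on $(0,\infty)$.

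To control $t$, I would use the last identity in~\eqref{alpha}, rewritten as
\begin{equation*}
  -\alpha\sqrt{\sqrt{3}S} = \Phi(t) \,, \qquad
  \Phi(t) := t\left(\arctanh t + \arctanh\tfrac{t}{2}\right) \,.
\end{equation*}
On $(0,1)$ the function $\Phi$ is a product of two strictly positive and strictly increasing factors, hence itself strictly increasing, with $\Phi(0^+) = 0$ and $\Phi(1^-) = +\infty$; consequently $\Phi$ is a strictly increasing bijection of $(0,1)$ onto $(0,\infty)$ with a strictly increasing inverse. For fixed $\alpha < 0$ the left-hand side $-\alpha\sqrt{\sqrt{3}S}$ is a strictly increasing function of $S$ ranging over $(0,\infty)$, so $t = \Phi^{-1}(-\alpha\sqrt{\sqrt{3}S})$ is strictly increasing in $S$. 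This closes the argument.

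The argument is short and I do not expect a genuine obstacle; the only points that require care are the algebraic simplification leading to $\lambda_0 = -4\alpha^2/t^2$ and the verification that the defining relation determines $t$ uniquely, the latter being guaranteed by the bijectivity of $\Phi$. This bijectivity also makes explicit that $t$, and hence the ratio $\lambda_0/\alpha^2$, depends on $\alpha$ and $S$ only through the combination $\alpha\sqrt{\sqrt{3}S}$. One could alternatively differentiate $\Phi(t) = -\alpha\sqrt{\sqrt{3}S}$ implicitly to get $\der t/\der S > 0$ and then conclude from $\der\lambda_0/\der S = 8\alpha^2 t^{-3}\,\der t/\der S > 0$, but the monotonicity of $\Phi$ already yields the result without any differentiation.
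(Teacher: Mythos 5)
Your proof is correct and takes essentially the same route as the paper: since your parameter satisfies $t = \tanh M$, your identity $\lambda_0 = -4\alpha^2/t^2$ and the relation $\Phi(t) = -\alpha\sqrt{\sqrt{3}S}$ are exactly the paper's formula $\lambda_0 = -4\alpha^2/\tanh^2 M$ and its transcendental equation \eqref{eq:M} rewritten in the variable $t$. In both arguments the monotonicity in $S$ follows from the same observation, namely that one side of the transcendental equation is increasing in $S$ while the other side is an increasing bijection in the parameter ($t$ in your version, $M$ in the paper's).
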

\begin{proof}
	First, we find from~\eqref{eq:M} and the expression for $L$ (above that equation) that
	\[
		K = -\frac{\aa\sqrt{\sqrt{3}S}}{\tanh M}.
	\]
	Hence, it follows from~\eqref{lambda0} 
	that the lowest eigenvalue can be expressed as
	\[
		\lm_0(\aa) = -\frac{4\aa^2}{\tanh^2M}.
	\]
	In order to prove that $\lm_0(\aa)$ is an increasing function of $S$ it suffices to show that $M$ is an increasing function of $S$. To this aim we analyse the transcendental equation~\eqref{eq:M} in detail.
	The right-hand side of that equation is positive and increasing in $S$. Hence, it suffices to check that the left-hand side is an increasing function of $M > 0$. The desired property immediately follows from the fact that $\tanh(x)$ and
	thus its inverse $\arctanh(x)$ are increasing functions of $x > 0$.	
\end{proof}	
%

\section{Local optimality of the equilateral triangle}\label{Sec.derivative}
%
The aim of this section is to show that under fixed area constraint the equilateral triangle is a local maximiser of the lowest Robin eigenvalue of a triangle for the negative boundary parameter lying in a neighbourhood of $0$ determined solely by the area of the triangle. We make use of the reduction of the spectral problem for the Robin Laplacian on a general triangle to an equivalent problem on an equilateral triangle constructed in Subsection~\ref{sec:geom}.
This reduction allows us to view the obtained family of respective quadratic forms as a self-adjoint holomorphic family and to compute the derivatives of the lowest eigenvalue in terms of geometric parameters. We establish that the first-order and the second-order mixed derivatives vanish and obtain upper bounds on the second-order partial derivatives in terms of certain quadratic forms evaluated on the ground state for the equilateral triangle. In the course of
the computations we encounter cumbersome intermediate expressions that we provide for convenience of the reader.  The rest of the analysis boils down to showing that these upper bounds are negative under certain assumptions on the boundary parameter. 

Since the operators~$H_\alpha$ and~$\hat{H}_{\alpha,a,c}$ are isospectral,
$\lambda_1^\alpha(\Omega_{a,c}) =: \lambda_{a,c}$ 
is the lowest eigenvalue of~$\hat{H}_{\alpha,a,c}$ as well.
It is easy to check that the operators $\hat{H}_{\alpha,a,c}$ form
a self-adjoint holomorphic family 
(of type~(B) in the sense of Kato~\cite[Sec.~VII.4]{7})
with respect to the parameters~$a$ and~$c$ (separately).
Moreover, the eigenvalue~$\lambda_{a,c}$ is simple.
Hence, $a \mapsto \lambda_{a,c}$ is a real-analytic function on~$\Real$ and $c \mapsto \lambda_{a,c}$ is a real-analytic function on~$(0, \infty)$ .
Let~$\psi_{a,c}$ denote the positive eigenfunction of~$\hat{H}_{\alpha,a,c}$
corresponding to~$\lambda_{a,c}$ such that $\|\psi_{a,c}\|_{\sii(\Omega_0)}=1$.
Then we also have that $a \mapsto \psi_{a,c}$ is a real-analytic function on~$\Real$ and $c \mapsto \psi_{a,c}$ is a real-analytic function on~$(0, \infty)$.
For the equilateral triangle,
we abbreviate $\lambda_0 :=\lambda_{a_0,c_0}$ and $\psi_0 :=\psi_{a_0,c_0}$.

The eigenvalue problem $\hat{H}_{\alpha,a,c} \psi_{a,c} = \lambda_{a,c} \psi_{a,c}$
is equivalent to the weak formulation
\begin{equation}\label{id0}
  \forall \phi \in H^1(\Omega_0) \,, \qquad
  \hat{h}_{\alpha,a,c}(\phi,\psi_{a,c}) = \lambda_{a,c} \, (\phi,\psi_{a,c})
  \,,
\end{equation}
where we abbreviate $(\cdot,\cdot) := (\cdot,\cdot)_{\sii(\Omega_0)}$.
We shall also write 
$(\cdot,\cdot)_{k} := (\cdot,\cdot)_{\sii(\Gamma_0^{(k)})}$
and similarly for the corresponding norm.
Note that the test function~$\phi$ can be assumed 
to be real-valued without loss of generality.

\subsection{The first derivative with respect to~\texorpdfstring{$a$}{a}}
Differentiating the identity~\eqref{id0} with respect to~$a$
(the corresponding derivative being denoted by the dot),
we get
\begin{equation}\label{id1}
\begin{aligned}
&\mbox{$\frac{S}{\sqrt{3}c^2}$} \, (\partial_1\phi,\partial_1\dot\psi_{a,c}) 
  + \left(\mbox{$c\sqrt{\frac{\sqrt{3}}{S}}$} \partial_2\phi - \mbox{$\frac{a}{\sqrt{\sqrt{3}S}}$} \partial_1\phi,
 \mbox{$ c\sqrt{\frac{\sqrt{3}}{S}}$} \partial_2\dot\psi_{a,c} -\mbox{$ \frac{a}{\sqrt{\sqrt{3}S}}$} \partial_1\dot\psi_{a,c}\right)\\
&\quad\mbox{$  - \frac{1}{\sqrt{\sqrt{3}S}}$} \, \left(\partial_1\phi,\mbox{$c\sqrt{\frac{\sqrt{3}}{S}}$}\partial_2\psi_{a,c}-\mbox{$\frac{a}{\sqrt{\sqrt{3}S}}$}\partial_1\psi_{a,c}\right)
\mbox{$  - \frac{1}{\sqrt{\sqrt{3}S}}$} \, \left(\partial_1\psi_{a,c},\mbox{$c\sqrt{\frac{\sqrt{3}}{S}}$}\partial_2\phi - \mbox{$\frac{a}{\sqrt{\sqrt{3}S}}$} \partial_1\phi\right)  
  \\
 &\qquad + \alpha \, \mbox{$ c\sqrt{\frac{\sqrt{3}}{S}}$} \, (\phi,\dot\psi_{a,c})_0 
  + \alpha \, \mbox{$\sqrt{\frac{\sqrt{3}}{S}}$} \frac{\sqrt{c^2(a+c)^2+S^2}}{2c} \, (\phi,\dot\psi_{a,c})_1 
  + \alpha \, \mbox{$\sqrt{\frac{\sqrt{3}}{S}}$} \frac{\sqrt{c^2(a-c)^2+S^2}}{2c}\, (\phi,\dot\psi_{a,c})_2
  \\
  &\qquad\qquad+ \alpha \, \mbox{$\sqrt{\frac{\sqrt{3}}{S}}$} \frac{c(a+c)}{2\sqrt{c^2(a+c)^2+S^2}}\, (\phi,\psi_{a,c})_1 
  + \alpha \, \mbox{$\sqrt{\frac{\sqrt{3}}{S}}$} \frac{c(a-c)}{2\sqrt{c^2(a-c)^2+S^2}}\, (\phi,\psi_{a,c})_2 
  \\
  &\qquad\qquad\qquad= \lambda_{a,c} (\phi,\dot\psi_{a,c}) + \dot\lambda_{a,c} (\phi,\psi_{a,c})
  \,,
\end{aligned}
\end{equation}
where we implicitly used that $\dot{\psi}_{a,c}\in H^1(\Omg_0)$; \cf~\cite[Prop. 2.3]{KL1}.
Choosing $\phi = \psi_{a,c}$ in~\eqref{id1} 
and combining the obtained identity
with~\eqref{id0} where we choose $\phi = \dot\psi_{a,c}$, we obtain
\begin{equation}\label{der1}
\begin{aligned}
\dot\lambda_{a,c} = -\mbox{$\frac{2}{\sqrt{\sqrt{3}S}}$} \, \left(\partial_1\psi_{a,c},\mbox{$c\sqrt{\frac{\sqrt{3}}{S}}$}\partial_2\psi_{a,c}-\mbox{$\frac{a}{\sqrt{\sqrt{3}S}}$}\partial_1\psi_{a,c}\right) 
  + \alpha \, \mbox{$ \sqrt{\frac{\sqrt{3}}{S}}$} \frac{c(a+c)}{2\sqrt{c^2(a+c)^2+S^2}} \, \|\psi_{a,c}\|_1^2 \\
  + \alpha \, \mbox{$\sqrt{\frac{\sqrt{3}}{S}}$} \frac{c(a-c)}{2\sqrt{c^2(a-c)^2+S^2}} \, \|\psi_{a,c}\|_2^2 
  \,.
  \end{aligned}
\end{equation}
By the symmetries of the equilateral triangle~$\Omega_0$,
it is easily seen that $\|\psi_0\|_0 = \|\psi_0\|_1 = \|\psi_0\|_2$.
Moreover, $\psi_0$ is an even function with respect to variable~$x$ (\cf~\eqref{eq:ef}).
Therefore,
\begin{equation}\label{symmetry}
  (\partial_1\psi_0,\partial_2\psi_0) = 0
  \,.
\end{equation}
Consequently,
\begin{equation}
  \dot\lambda_0 = 0  
  \,,
\end{equation}
so the equilateral triangle is a critical geometry 
with respect to variation of the geometric parameter~$a$. 

\subsection{The second derivative with respect to~\texorpdfstring{$a$}{a}} 
Differentiating~\eqref{der1} with respect to~$a$,
we get
\begin{equation}\label{der2}
\begin{aligned}
\ddot\lambda_{a,c} & = -\mbox{$\frac{2}{\sqrt{\sqrt{3}S}}$} \, \left(\partial_1\dot\psi_{a,c},\mbox{$c\sqrt{\frac{\sqrt{3}}{S}}$} \partial_2\psi_{a,c}-\mbox{$\frac{a}{\sqrt{\sqrt{3}S}}$}\partial_1\psi_{a,c}\right)\\
  &\qquad -\mbox{$\frac{2}{\sqrt{\sqrt{3}S}}$} \, \left(\partial_1\psi_{a,c},\mbox{$c\sqrt{\frac{\sqrt{3}}{S}}$}\partial_2\dot\psi_{a,c}-\mbox{$\frac{a}{\sqrt{\sqrt{3}S}}$}\partial_1\dot\psi_{a,c}-\mbox{$\frac{1}{\sqrt{\sqrt{3}S}}$}\partial_1\psi_{a,c}\right)\\
  &\qquad\quad+ \alpha \, \mbox{$\sqrt{\frac{\sqrt{3}}{S}}$} \frac{c(a+c)}{\sqrt{S^2+c^2(a+c)^2}} \, (\psi_{a,c},\dot\psi_{a,c})_1
  + \alpha \, \mbox{$\sqrt{\frac{\sqrt{3}}{S}}$}\frac{c(a-c)}{\sqrt{S^2+c^2(a-c)^2}} \, (\psi_{a,c},\dot\psi_{a,c})_2 \\
  &\qquad\qquad + \frac{c\alpha}{2} \, \mbox{$\sqrt{\frac{\sqrt{3}}{S}}$}
  \frac{S^2}{[c^2(a+c)^2+S^2]^{\frac{3}{2}}} \, \|\psi_{a,c}\|_1^2 
  + \frac{c\alpha}{2} \, \mbox{$\sqrt{\frac{\sqrt{3}}{S}}$} 
   \frac{S^2}{[c^2(a-c)^2+S^2]^{\frac{3}{2}}} \, \|\psi_{a,c}\|_2^2  
  \,.
\end{aligned}
\end{equation}
Putting $a=a_0=0$, $c = c_0 = \sqrt{\frac{S}{\sqrt{3}}}$ 
and using the symmetry 
$\|\psi_0\|_1^2 = \|\psi_0\|_2^2 = \frac{1}{3}\|\psi_0\|_{\sii(\partial\Omega_0)}^2$, 
we deduce
\begin{equation}\label{der2.0}
\begin{aligned}
  \ddot\lambda_0
  =-\mbox{$\frac{2}{\sqrt{\sqrt{3}S}}$} \, (\partial_1\dot\psi_0,\partial_2\psi_0)
  -\mbox{$\frac{2}{\sqrt{\sqrt{3}S}}$} \, (\partial_1\psi_0,\partial_2\dot\psi_0)
  + \frac{\alpha}{2} \,\mbox{$\sqrt{\frac{\sqrt{3}}{S}}$} \, (\psi_0,\dot\psi_0)_1
  - \frac{\alpha}{2} \, \mbox{$\sqrt{\frac{\sqrt{3}}{S}}$} \, (\psi_0,\dot\psi_0)_2
  \\
   + \alpha \, \mbox{$\frac{\sqrt{3}}{8S}$} 
  \, \|\psi_0\|_{\sii(\partial\Omega_0)}^2 
  + \mbox{$\frac{2}{\sqrt{3}S}$} \, \|\partial_1\psi_0\|^2 
  \,.
\end{aligned}
\end{equation}
Choosing $\phi=\dot\psi_{a,c}$ in~\eqref{id1} 
and putting $a=a_0=0, c=c_0=\sqrt{\frac{S}{\sqrt{3}}}$,
we have
\begin{equation}
\begin{aligned}
  \|\nabla\dot\psi_0\|^2
  - \mbox{$\frac{1}{\sqrt{\sqrt{3}S}}$} (\partial_1\dot\psi_0,\partial_2\psi_0)
  - \mbox{$\frac{1}{\sqrt{\sqrt{3}S}}$} (\partial_2\dot\psi_0,\partial_1\psi_0)
  + \alpha \, \|\dot\psi_0\|_{\sii(\partial\Omega_0)}^2  
  + \frac{\alpha}{4} \, \mbox{$\sqrt{\frac{\sqrt{3}}{S}}$} \, (\dot\psi_0,\psi_0)_1 \\
  - \frac{\alpha}{4} \, \mbox{$\sqrt{\frac{\sqrt{3}}{S}}$} \, (\dot\psi_0,\psi_0)_2 
  = \lambda_0 \|\dot\psi_0\|^2 
  \,.
\end{aligned}
\end{equation}
Using the variational characterisation of~$\lambda_0$, we obtain
\begin{equation}
   - \mbox{$\frac{1}{\sqrt{\sqrt{3}S}}$} (\partial_1\dot\psi_0,\partial_2\psi_0)
  - \mbox{$\frac{1}{\sqrt{\sqrt{3}S}}$} (\partial_2\dot\psi_0,\partial_1\psi_0)  
  + \frac{\alpha}{4} \, \mbox{$\sqrt{\frac{\sqrt{3}}{S}}$} \, (\dot\psi_0,\psi_0)_1 \\
  - \frac{\alpha}{4} \, \mbox{$\sqrt{\frac{\sqrt{3}}{S}}$} \, (\dot\psi_0,\psi_0)_2
  \leq 0
  \,.
\end{equation}
Substituting this estimate into~\eqref{der2.0}, we conclude with
\begin{equation}\label{second}
\begin{aligned}
  \ddot\lambda_0 
  &\leq
  \mbox{$\frac{2}{\sqrt{3}S}$} \, \|\partial_1\psi_0\|^2
  + \alpha \, \mbox{$\frac{\sqrt{3}}{8S}$}
  \, \|\psi_0\|_{\sii(\partial\Omega_0)}^2 
  \\
  &= 
  \mbox{$\frac{1}{\sqrt{3}S}$}
  \left(
  \|\nabla\psi_0\|^2
  + \alpha \, \mbox{$\frac{3}{8}$} 
  \, \|\psi_0\|_{\sii(\partial\Omega_0)}^2
  \right)
  \,,
\end{aligned}
\end{equation}
where the equality is obtained by dint of
the symmetry result $\|\partial_1\psi_0\| = \|\partial_2\psi_0\|$.
 
\subsection{The mixed second derivative} 
We next calculate the mixed derivative of $\lambda_{a,c}$ with respect to $a,c$ at the point $a = a_0 = 0$ and $c = c_0 = \sqrt{\frac{S}{\sqrt{3}}}$. 
We denote by the apostrophe the corresponding derivative with respect to~$c$. Putting $a=0$ in ~\eqref{der1}, we obtain
\begin{equation}\label{second0}
\begin{aligned}
\dot\lambda_{0,c}\! =\! -\mbox{$\frac{2}{\sqrt{\sqrt{3}S}}$} \, \left(\partial_1\psi_{0,c},\mbox{$c\sqrt{\frac{\sqrt{3}}{S}}$}\partial_2\psi_{0,c}\right) 
  + \alpha \, \mbox{$ \sqrt{\frac{\sqrt{3}}{S}}$} \frac{c^2}{2\sqrt{c^4+S^2}} \, \|\psi_{0,c}\|_1^2
  + \alpha \, \mbox{$\sqrt{\frac{\sqrt{3}}{S}}$} \frac{-c^2}{2\sqrt{c^4+S^2}} \, \|\psi_{0,c}\|_2^2 
  \,.
  \end{aligned}
\end{equation}
The symmetry of the domain $\Omega_{0,c}$ easily implies that $\psi_{0,c}(x,y)=\psi_{0,c}(-x,y)$ for all $(x,y)\in\Omega_{0,c}$, that $ (\partial_1\psi_{0,c},\partial_2\psi_{0,c})=0$, $\|\psi_{0,c}\|_1 = \|\psi_{0,c}\|_2$ and then, ${\dot\lambda_{0,c}} =0 $ for all $c > 0$. Moreover, as $a,c$ are independent, we get that 
$$
  \dot\lambda_0'=0
  .
$$

\subsection{The first derivative with respect to~\texorpdfstring{$c$}{c}} 
Differentiating ~\eqref{id0} with respect to $c$, we have

\begin{equation}\label{id1c}
\begin{aligned}
&-\mbox{$\frac{2S}{\sqrt{3}c^3}$} \, (\partial_1\psi_{a,c},\partial_1\phi)+\mbox{$\frac{S}{\sqrt{3}c^2}$} \, (\partial_1\phi, \partial_1\psi_{a,c}')
   + \left(\mbox{$\sqrt{\frac{\sqrt{3}}{S}}$} \partial_2\phi,
 \mbox{$c\sqrt{\frac{\sqrt{3}}{S}}$} \partial_2\psi_{a,c}-\mbox{$\frac{a}{\sqrt{\sqrt{3}S}}$} \partial_1\psi_{a,c}\right)\\
&\quad+ \left(\mbox{$c\sqrt{\frac{\sqrt{3}}{S}}$}\partial_2\phi -\mbox{$\frac{a}{\sqrt{\sqrt{3}S}}$}\partial_1\phi, \mbox{$c\sqrt{\frac{\sqrt{3}}{S}}$}\partial_2\psi_{a,c}'-\mbox{$\frac{a}{\sqrt{\sqrt{3}S}}$}\partial_1\psi_{a,c}'+\mbox{$\sqrt{\frac{\sqrt{3}}{S}}$}\partial_2\psi_{a,c}\right) 
  \\
  &\quad+ \alpha \, \mbox{$c\sqrt{\frac{\sqrt{3}}{S}}$} \, (\phi,\psi_{a,c}')_0
  + \alpha \, \mbox{$\sqrt{\frac{\sqrt{3}}{S}}$} \frac{\sqrt{c^2(a+c)^2+S^2}}{2c} \, (\phi,\psi_{a,c}')_1 
  + \alpha \, \mbox{$\sqrt{\frac{\sqrt{3}}{S}}$} \frac{\sqrt{c^2(a-c)^2+S^2}}{2c} \, (\phi,\psi_{a,c}')_2
  \\
  &\quad+ \alpha \, \mbox{$\sqrt{\frac{\sqrt{3}}{S}}$} \, (\phi, \psi_{a,c})_0 
  + \frac{\alpha}{2} \, \mbox{$\sqrt{\frac{\sqrt{3}}{S}}$} \frac{c^3(a+c)-S^2}{c^2\sqrt{c^2(a+c)^2+S^2}} (\phi, \psi_{a,c})_1 
  + \frac{\alpha}{2} \, \mbox{$\sqrt{\frac{\sqrt{3}}{S}}$} \frac{c^3(-a+c)-S^2}{c^2\sqrt{c^2(-a+c)^2+S^2}} (\phi, \psi_{a,c})_2
  \\
  &\qquad\qquad\qquad= \lambda_{a,c} (\phi,\psi_{a,c}') + \lambda_{a,c}' (\phi,\psi_{a,c})
  \,.
\end{aligned}
\end{equation}
Choosing $\phi=\psi_{a,c}$ in ~\eqref{id1c} and combining with ~\eqref{id0} where we choose $\phi=\psi_{a,c}'$, we get
\begin{equation}\label{der1c}
\begin{aligned}
\lambda_{a,c}' = -\mbox{$\frac{2S}{\sqrt{3}c^3}$} \, \|\partial_1\psi_{a,c}\|^2 + 2\left(\mbox{$\sqrt{\frac{\sqrt{3}}{S}}$}\partial_2\psi_{a,c},\mbox{$c\sqrt{\frac{\sqrt{3}}{S}}$}\partial_2\psi_{a,c} -\mbox{$\frac{a}{\sqrt{\sqrt{3}S}}$}\partial_1\psi_{a,c}\right) 
  + \alpha \, \mbox{$\sqrt{\frac{\sqrt{3}}{S}}$} \, \|\psi_{a,c}\|_0^2
   \\
  + \alpha \, \mbox{$ \sqrt{\frac{\sqrt{3}}{S}}$} \frac{c^3(a+c)-S^2}{2c^2\sqrt{c^2(a+c)^2+S^2}} \, \|\psi_{a,c}\|_1^2 
  + \alpha \, \mbox{$\sqrt{\frac{\sqrt{3}}{S}}$} \frac{c^3(-a+c)-S^2}{2c^2\sqrt{c^2(a-c)^2+S^2}} \, \|\psi_{a,c}\|_2^2 
  \,.
  \end{aligned}
\end{equation}
Plugging $a = a_0 = 0$ and $c= c_0 = \sqrt{\frac{S}{\sqrt{3}}}$ into the above formula we find that
\[
	\lambda_0' = 0.
\]
Hence, the equilateral triangle is also critical with respect to variation of the parameter $c$.

\subsection{The second derivative with respect to~\texorpdfstring{$c$}{c}} 
Differentiating the identity ~\eqref{der1c} with respect to variable $c$, we have
\begin{equation}\label{der2c}
\begin{aligned}
\lambda_{a,c}'' &= -\mbox{$\frac{4S}{\sqrt{3}c^3}$} \, (\partial_1\psi_{a,c},\partial_1\psi_{a,c}') +
\mbox{$\frac{6S}{\sqrt{3}c^4}$}\|\partial_1\psi_{a,c}\|^2+ 
   2\left(\mbox{$\sqrt{\frac{\sqrt{3}}{S}}$}\partial_2\psi_{a,c}',\mbox{$c\sqrt{\frac{\sqrt{3}}{S}}$}\partial_2\psi_{a,c}-\mbox{$\frac{a}{\sqrt{\sqrt{3}S}}$}\partial_1\psi_{a,c}\right)\\
   &\quad+2\left(\mbox{$\sqrt{\frac{\sqrt{3}}{S}}$}\partial_2\psi_{a,c},\mbox{$c\sqrt{\frac{\sqrt{3}}{S}}$}\partial_2\psi_{a,c}'-\mbox{$\frac{a}{\sqrt{\sqrt{3}S}}$}\partial_1\psi_{a,c}'+ \mbox{$\sqrt{\frac{\sqrt{3}}{S}}$}\partial_2\psi_{a,c}\right)\\
  &\quad\quad+ 2\alpha \, \mbox{$\sqrt{\frac{\sqrt{3}}{S}}$} \, (\psi_{a,c},\psi_{a,c}')_0
   + \alpha \, \mbox{$\sqrt{\frac{\sqrt{3}}{S}}$} \frac{c^3(a+c)-S^2}{c^2\sqrt{c^2(a+c)^2+S^2}} \, (\psi_{a,c},\psi_{a,c}')_1 \\
   &\quad\quad\quad+ \alpha \, \mbox{$\sqrt{\frac{\sqrt{3}}{S}}$} \frac{c^3(-a+c)-S^2}{c^2\sqrt{c^2(-a+c)^2+S^2}} \, (\psi_{a,c},\psi_{a,c}')_2\\
   &\quad\quad\quad\quad+ \frac{\alpha}{2} \, \mbox{$\sqrt{\frac{\sqrt{3}}{S}}$} \left(\frac{c^3(a+c)-S^2}{c^2\sqrt{c^2(a+c)^2+S^2}}\right)'  \, \|\psi_{a,c}\|_1^2 
+ \frac{\alpha}{2} \, \mbox{$\sqrt{\frac{\sqrt{3}}{S}}$}\left(\frac{c^3(-a+c)-S^2}{c^2\sqrt{c^2(-a+c)^2+S^2}}\right)'  \, \|\psi_{a,c}\|_{2}^2    
  \,.
\end{aligned}
\end{equation}
Using the equation~\eqref{id1c} with $\phi=\psi_{a,c}'$, 
$a=a_0=0$, and $c=c_0=\sqrt{\frac{S}{\sqrt{3}}}$, we obtain 
\begin{equation}\label{1lc}
\begin{aligned}
  \|\nabla\psi_0'\|^2
  - 2\mbox{$\sqrt{\frac{\sqrt{3}}{S}}$} (\partial_1\psi_0',\partial_1\psi_0)
  + 2\mbox{$\sqrt{\frac{\sqrt{3}}{S}}$} (\partial_2\psi_0',\partial_2\psi_0)
  + \alpha \, \|\psi_0'\|_{\sii(\partial\Omega_0)}^2  
  + \alpha \, \mbox{$\sqrt{\frac{\sqrt{3}}{S}}$} \, (\psi_0',\psi_0)_1 \\
  - \frac{\alpha}{2} \, \mbox{$\sqrt{\frac{\sqrt{3}}{S}}$} \, (\psi_0',\psi_0)_1
  - \frac{\alpha}{2} \, \mbox{$\sqrt{\frac{\sqrt{3}}{S}}$} \, (\psi_0',\psi_0)_2
  = \lambda_0 \|\psi_0'\|^2 
  \,.
\end{aligned}
\end{equation}
Combining ~\eqref{der2c} in which we put 
$a=a_0=0, c=c_0=\sqrt{\frac{S}{\sqrt{3}}}$ with ~\eqref{1lc}, we deduce
\begin{equation}\label{der2c0}
\begin{aligned}
\lambda_0''= 2\big(\lambda_0 \|\psi_0'\|^2-\alpha \, \|\psi_0'\|_{\sii(\partial\Omega_0)}^2 -\|\nabla\psi_0'\|^2\big)+ 4 \mbox{$\frac{\sqrt{3}}{S}$} \, \|\nabla\psi_0\|^2 +\alpha \, \mbox{$\frac{3\sqrt{3}}{2S}$} \|\psi_0\|_{\sii(\partial\Omega_0)}^2 
  \,.
\end{aligned}
\end{equation}
By using the variational characterisation of $\lambda_0$, we estimate
\begin{equation}\label{second2}
\lambda_0'' \leq 4 \mbox{$\frac{\sqrt{3}}{S}$} \, (\|\nabla\psi_0\|^2 +\alpha \, \mbox{$\frac{3}{8}$} \|\psi_0\|_{\sii(\partial\Omega_0)}^2 ).
\end{equation}

\subsection{Conclusions from the Hessian estimates} 
Now we are in position to formulate and prove the main result of this section on local optimality of equilateral triangles for moderate boundary parameters. The proof relies on showing that for certain values of the boundary parameter~$\aa$ the upper bounds on the second-order partial derivatives of the eigenvalue given in~\eqref{second} and~\eqref{second2} are negative. For $\aa$ sufficiently large by absolute value these bounds cease to be negative and in order to prove local optimality of the equilateral triangle finer estimates are necessary,
which we leave as an open problem in this paper.

Let us reformulate Theorem~\ref{Thm.local} from the introduction as follows.
\begin{Theorem}\label{thm:local}
There exists $\alpha_0= \alpha_0(S) < 0$ such that 
the equilateral triangle is a strict local maximiser of the lowest Robin eigenvalue in the family of triangles of area $S$ for the
boundary parameters $\alpha \in [\alpha_0,0)$. Moreover, for a fixed area $S$ and any
$\alpha\in [\alpha_0,0)$ there exist a sufficiently small $\eps > 0$ and a constant $C > 0$ such that
\begin{equation}\label{eq:ev_estimate}
	\lm_{a,c} 
	\le 
	\lm_0 - 
	C\left(a^2 +(c-c_0)^2\right)\,,
\end{equation}
provided that $|a| +|c-c_0| < \eps$.
\end{Theorem}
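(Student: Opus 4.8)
The plan is to convert the separate computations of Section~\ref{Sec.derivative} into a genuine two-variable second-order Taylor expansion of $(a,c)\mapsto\lm_{a,c}$ at the equilateral point $(a_0,c_0)=(0,c_0)$ and then read off strict local maximality from the sign of the Hessian. First I would upgrade the separate real-analyticity recorded after~\eqref{form.hat} to \emph{joint} real-analyticity of $\lm_{a,c}$ on a neighbourhood of $(0,c_0)$. This is legitimate because the coefficients of the form~\eqref{form.hat} depend jointly analytically on $(a,c)$ there: the only non-polynomial ingredients are the boundary weights $\sqrt{c^2(a\pm c)^2+S^2}/(2c)$, whose radicands are bounded below by $S^2>0$. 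Since $\lm_{a,c}$ is simple, the associated Riesz projection, and hence $\lm_{a,c}$ itself, depend jointly analytically on $(a,c)$, so the second-order Taylor remainder is genuinely $O\big((|a|+|c-c_0|)^3\big)$. The vanishing of the first derivatives $\dot\lm_0=\lm_0'=0$ and of the mixed derivative $\dot\lm_0'=0$ then shows that the gradient vanishes and the Hessian is diagonal at $(0,c_0)$, giving
\[
  \lm_{a,c} = \lm_0 + \tfrac12\,\ddot\lm_0\,a^2 + \tfrac12\,\lm_0''\,(c-c_0)^2 + O\big((|a|+|c-c_0|)^3\big).
\]
If both diagonal entries are strictly negative, choosing $\eps$ small enough to absorb the cubic remainder yields~\eqref{eq:ev_estimate} with any $C\in\big(0,\tfrac14\min\{-\ddot\lm_0,-\lm_0''\}\big)$, and in particular strict local maximality.

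It therefore remains to make the upper bounds~\eqref{second} and~\eqref{second2} strictly negative. Both carry the sign of the single quantity
\[
  Q := \|\nabla\psi_0\|^2 + \tfrac38\,\aa\,\|\psi_0\|_{\sii(\partial\Omega_0)}^2,
\]
since their prefactors $\tfrac{1}{\sqrt3 S}$ and $\tfrac{4\sqrt3}{S}$ are positive. Testing the variational characterisation~\eqref{variational} with the normalised ground state gives the identity $\lm_0=\|\nabla\psi_0\|^2+\aa\,\|\psi_0\|_{\sii(\partial\Omega_0)}^2$, whence $Q=\lm_0-\tfrac58\,\aa\,\|\psi_0\|_{\sii(\partial\Omega_0)}^2$. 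Using the small-coupling behaviour $\lm_0=\aa\,|\partial\Omega_0|/S+O(\aa^2)$ together with $\psi_0\to S^{-1/2}$ (the normalised Neumann ground state) as $\aa\to0^-$, so that $\|\psi_0\|_{\sii(\partial\Omega_0)}^2\to|\partial\Omega_0|/S$, one finds $Q=\tfrac38\,(|\partial\Omega_0|/S)\,\aa+O(\aa^2)$, which is strictly negative on some interval $[\alpha_0,0)$ with $\alpha_0=\alpha_0(S)<0$. An explicit $\alpha_0$ can be extracted by substituting the exact expressions from Subsection~\ref{ssec:equilateral}, namely $\lm_0$ from~\eqref{lambda0} and $\psi_0$ from~\eqref{eq:ef}, into $Q$ and solving the resulting transcendental inequality (conveniently re-expressed through the parameter $t$ via the identities~\eqref{alpha}).

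The main obstacle I expect is the bookkeeping that turns separate analyticity into a controlled joint expansion: one must justify that the separately computed partial and mixed derivatives are genuinely the entries of a single two-variable Hessian with a cubic remainder, rather than merely one-dimensional slopes along coordinate lines. Once joint analyticity is secured this is automatic, so the remaining work is the (routine but careful) verification that the radicands in~\eqref{form.hat} stay bounded away from zero and that the lowest eigenvalue remains simple on a full two-dimensional neighbourhood of $(0,c_0)$, guaranteeing that the $(a,c)$-chart faithfully describes all nearby triangles of area $S$ up to congruence.
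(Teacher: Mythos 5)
Your proposal is correct, and its skeleton coincides with the paper's proof: vanishing of $\dot\lambda_0$, $\lambda_0'$ and $\dot\lambda_0'$, the Hessian upper bounds \eqref{second} and \eqref{second2}, and the reduction of the whole problem to the sign of the single quantity $Q=\|\nabla\psi_0\|^2+\tfrac38\,\alpha\,\|\psi_0\|_{\sii(\partial\Omega_0)}^2$ (the paper's $f(\alpha)$ is exactly $Q/3$). The genuine difference is how negativity of $Q$ is established. The paper stays exact: writing $f(\alpha)=\tfrac13\lambda_0-\tfrac{5}{24}\alpha(\lambda_0)'_\alpha$ and using the explicit solvability of the equilateral triangle through the identities \eqref{alpha}, it converts $f(\alpha)<0$ into the transcendental inequality $g(t)<0$ of \eqref{g} for $t\in(0,1)$, and the elementary bound $\arctanh t>t$ then yields the explicit threshold $t_0=\sqrt{9-\sqrt{33}}/2$ and hence the quantitative bound $\alpha_0(S)\le-0.92/\sqrt{S}$. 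You instead argue perturbatively: $Q=\lambda_0-\tfrac58\,\alpha\,\|\psi_0\|_{\sii(\partial\Omega_0)}^2=\tfrac38\,\alpha\,|\partial\Omega_0|/S+o(\alpha)$ as $\alpha\to0^-$, using the Hadamard derivative formula and the convergence of $\psi_0$ to the Neumann ground state --- facts the paper itself invokes in the proof of Theorem~\ref{Thm.small}. This is shorter and softer, but it produces a non-explicit $\alpha_0(S)$, whereas the explicit value of $\alpha_0$ is one of the selling points of the paper's version (see the remark following the theorem); you correctly note that the explicit route remains available by substituting the exact formulas. A second, worthwhile difference: you address the joint real-analyticity of $(a,c)\mapsto\lambda_{a,c}$, which is genuinely needed for a two-variable Taylor expansion with cubic remainder and which the paper leaves implicit (it only records analyticity in $a$ and in $c$ separately, and separate real-analyticity alone would not suffice); your argument via joint analyticity of the form coefficients in \eqref{form.hat}, simplicity of the lowest eigenvalue, and the Riesz projection closes that point correctly.
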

\begin{proof}
In order to show that the equilateral triangle is a strict local maximiser and that the bound in~\eqref{eq:ev_estimate} holds we rely
on the fact that $\dot\lambda_0 = \lambda_0' = \dot\lambda_0' = 0$ and on the estimates~\eqref{second} and~\eqref{second2}. The problem reduces
to finding the values of~$\alpha$ for which
\[
	f(\alpha): =\mbox{$\frac{1}{3}$} \|\nabla\psi_0\|^2 +\alpha \, \mbox{$\frac{1}{8}$} \|\psi_0\|_{\sii(\partial\Omega_0)}^2 <0.
\]
Thanks to the condition $\|\psi_0\| =1$ it follows from 
\cite[Chap.~4, Eq. (4.12)]{AH}
that $(\lambda_0)_\alpha' =\|\psi_0\|^2_{L^2(\partial\Omega_0)} >0 $, where $(\lambda_0)_\alpha'$ denotes the derivative of $\lambda_0$ with respect to
the boundary parameter $\alpha$.

From the variational characterisation of $\lambda_0$, we have
\begin{equation*}
\lambda_0  = \|\nabla \psi_0\|^2 + \alpha \|\psi_0\|^2_{L^2(\partial\Omega_0)}.
\end{equation*}
Hence, we obtain that
\begin{equation*}
\begin{aligned}
\mbox{$\frac{1}{3}$}\|\nabla\psi_0\|^2 +\alpha \, \mbox{$\frac{1}{8}$} \|\psi_0\|_{\sii(\partial\Omega_0)}^2&= \dfrac{1}{3} \lambda_0 - \frac{5}{24} \alpha \, \|\psi_0\|^2_{L^2(\partial\Omega_0)}
= \frac{\lambda_0}{3} - \frac{5\alpha}{24} (\lambda_0)_\alpha'.
\end{aligned}
\end{equation*}
Therefore, the following equivalence takes place
\begin{equation*}
f(\alpha) < 0\qquad \Longleftrightarrow\qquad \dfrac{(\lambda_0)_\alpha'}{\lambda_0} > \dfrac{8}{5\alpha}.
\end{equation*}
In addition, we deduce from ~\eqref{lambda0} that 
$(\lambda_0)_\alpha' = -\mbox{$\frac{8K}{\sqrt{3}S}$} K_\alpha'$. 
Recalling in addition~\eqref{alpha}, 
we get the following chain of equivalences
\[
f(\alpha) < 0
\qquad\Longleftrightarrow\qquad 
k(\alpha) <\dfrac{4}{5}
\qquad\Longleftrightarrow\qquad \dfrac{A}{K+A} <\dfrac{4}{5}\qquad
\Longleftrightarrow
\qquad A- 4K < 0.
\]
Eventually we obtain that
\begin{equation}\label{g}
 f(\alpha) < 0\qquad\Longleftrightarrow\qquad 
 g(t): =
 \dfrac{t}{1-t^2}+ \frac{t}{2(1-\frac{t^2}{4})} - 4\arctanh t -4 \arctanh \dfrac{t}{2} < 0,
\end{equation}	
where $t\in(0,1)$. 
It is easily seen that $g(t)$
is negative for small~$t$, while $g(t) \to +\infty$ as $t \to 1$
(see Figure~\ref{fig:graph_g}).
\begin{figure}[h!]
	\begin{center}
		\includegraphics[width=0.45\textwidth]{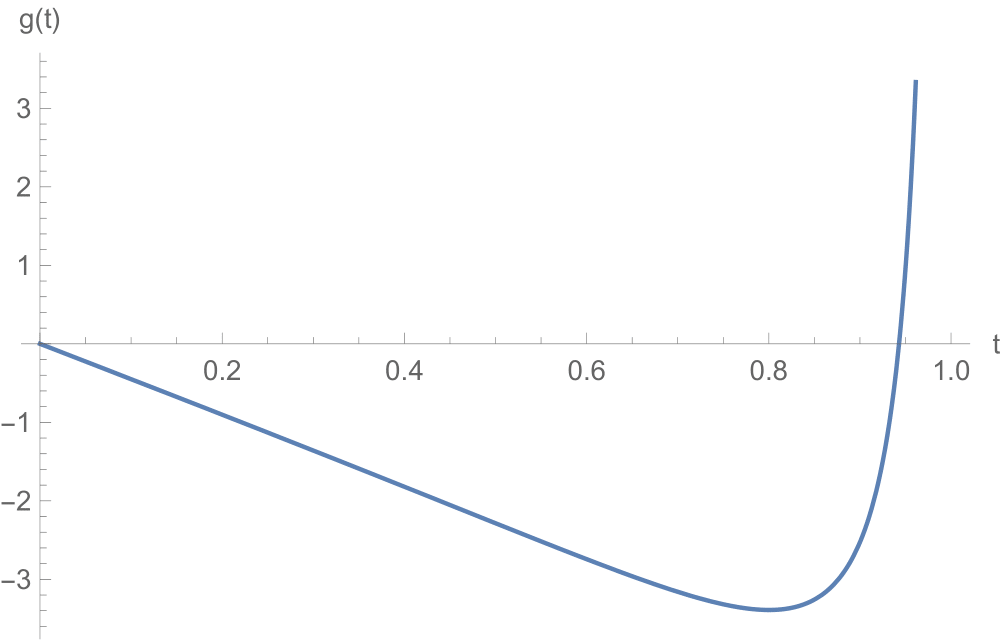}
		\caption{\small 
			The graph of the function $g$ defined in~\eqref{g}.}
		\label{fig:graph_g}
	\end{center}
\end{figure}
To get a quantitative estimate on the necessary smallness of~$t$, 
we use the elementary estimate $\arctanh t > t$ valid for all $t\in(0,1)$.
Then we deduce that $f(\alpha) < 0$ provided that 
$$
  t \leq t_0 := \frac{\sqrt{9-\sqrt{33}}}{2}.
$$
Using that $t=t(\alpha)$ is a decreasing function of $\alpha < 0$ 
and recalling~\eqref{alpha}, we get   
\begin{equation}\label{g.bis}
\begin{aligned}
	t \leq t_0 
	&\qquad\Longleftrightarrow\qquad 
	\alpha \geq -\mbox{$\frac{1}{\sqrt{\sqrt{3}S}}$} \, 
	t_0\left(\arctanh t_0+ \arctanh\dfrac{t_0}{2}\right)
	\\
	&\qquad\Longleftarrow\qquad
	\alpha \geq 
	-\frac{3}{2}\frac{t_0^2}{\sqrt{\sqrt{3}S}}.
\end{aligned}	
\end{equation}	
Hence, we also find using the exact value of $t_0$ that
\[
	\alpha \ge -\frac{0.92}{\sqrt{S}}\qquad\Longrightarrow\qquad t\le t_0.
\]
This concludes the proof of the theorem.
\end{proof}
\begin{Remark}
The present proof yields $\alpha_0(S) \leq -\frac{0.92}{\sqrt{S}}$,
so the bound is very quantitative, moreover it
provides an explicit dependence on the fixed area~$S$.
In particular, $\alpha_0(S) \to -\infty$ as $S \to 0$.
Let us also remark that a numerical analysis of the function~$g$
defined in~\eqref{g} yields that $g(t) < 0$ if, and only if,
$t < \tilde{t}_0 \approx 0.943$.
Plugging this value to the expression on the right-hand side 
of the first line of~\eqref{g.bis},
we would get an improved bound $\alpha_0(S) \leq -\frac{1.63}{\sqrt{S}}$.
\end{Remark}

To see that Theorem~\ref{Thm.local} follows 
as a consequence of Theorem~\ref{thm:local},
it is enough to notice that 
$|\Omega_{a,c}\setminus\Omega_0| \to 0$
implies that $a \to a_0 = 0$ and $c \to c_0 =\sqrt{\frac{S}{\sqrt{3}}}$.
In particular, the Hausdorff distance of $\Omega_{a,c}$ to $\Omega_0$
diminishes in the limit.

In the next corollary we provide a counterpart of Theorem~\ref{thm:local} 
under the fixed perimeter instead of the area constraint.
\begin{Corollary}\label{cor:local}
Let $\alpha_0= \alpha_0(S)$ be the negative constant from Theorem~\ref{thm:local}. 
The equilateral triangle is a strict local maximiser of the lowest Robin eigenvalue in the family of triangles of fixed perimeter $2\sqrt{3\sqrt{3}S}$ for the
	boundary parameters $\alpha \in (\alpha_0,0)$.
\end{Corollary}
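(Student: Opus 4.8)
The plan is to reduce the perimeter constraint to the already settled area constraint of Theorem~\ref{thm:local}, using two classical facts---the isoperimetric inequality for triangles and the scaling behaviour of the Robin eigenvalue---together with the area monotonicity of Proposition~\ref{prop:monotone_S}. I first observe that the prescribed perimeter coincides with that of $\Omega_0$, since~\eqref{explicit} gives $|\partial\Omega_0| = 6\sqrt{S/\sqrt{3}} = 2\sqrt{3\sqrt{3}S}$; thus $\Omega_0$ lies in the admissible class and is the only equilateral triangle therein. Throughout I use the elementary scaling identity, immediate from the variational characterisation~\eqref{variational},
\begin{equation*}
  \lambda_1^\alpha(\rho\,\Omega) = \rho^{-2}\,\lambda_1^{\rho\alpha}(\Omega), \qquad \rho>0,
\end{equation*}
which relates the eigenvalue of a scaled triangle to a rescaled boundary parameter.

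Let $\Omega$ be a triangle of perimeter $|\partial\Omega_0|$ lying close to $\Omega_0$, and put $S':=|\Omega|$. By the isoperimetric inequality for triangles (equivalently, maximisation of the area at fixed perimeter, which follows from Heron's formula and the arithmetic--geometric mean inequality) one has $S'\le S$, with equality exactly when $\Omega$ is equilateral, \ie~when $\Omega=\Omega_0$. Writing $\sigma:=\sqrt{S'/S}\in(0,1]$, I rescale to area $S$ by setting $\Omega'':=\sigma^{-1}\Omega$, so that $|\Omega''|=S$ and $\Omega''\to\Omega_0$ as $\Omega\to\Omega_0$. Since $\sigma\le1$ and $\alpha<0$, the rescaled parameter satisfies $\sigma\alpha\in[\alpha,0)\subset(\alpha_0(S),0)$, so Theorem~\ref{thm:local} applies \emph{at the fixed area $S$ and at the boundary parameter $\sigma\alpha$}. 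Combining the scaling identity with Theorem~\ref{thm:local} and then with Proposition~\ref{prop:monotone_S} (using $S'\le S$) yields the chain
\begin{equation*}
  \lambda_1^\alpha(\Omega)
  = \sigma^{-2}\,\lambda_1^{\sigma\alpha}(\Omega'')
  \le \sigma^{-2}\,\lambda_1^{\sigma\alpha}(\Omega_0)
  = \lambda_1^\alpha(\sigma\,\Omega_0)
  \le \lambda_1^\alpha(\Omega_0),
\end{equation*}
where $\sigma\,\Omega_0$ is the equilateral triangle of area $S'$ and the last inequality is Proposition~\ref{prop:monotone_S}.

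It remains to upgrade this to a \emph{strict} local maximum. If $\Omega\ne\Omega_0$ then $\Omega$ is not equilateral, hence $\sigma<1$ and the final inequality above is strict by the strict monotonicity in Proposition~\ref{prop:monotone_S}; the equilateral case forces $\Omega=\Omega_0$ and is excluded. The one point requiring care is the uniformity of the neighbourhood on which Theorem~\ref{thm:local} is invoked: as $\Omega\to\Omega_0$ one has $\sigma\to1$ and $\Omega''\to\Omega_0$, while the rescaled parameter satisfies $\sigma\alpha\in[\alpha,0)$. Here the \emph{open} endpoint $\alpha>\alpha_0(S)$ keeps $\sigma\alpha$ in a compact subinterval of $(\alpha_0(S),0)$ bounded away from $\alpha_0(S)$; along such an interval the constants $C$ and $\eps$ in~\eqref{eq:ev_estimate} can be chosen uniformly, by continuity of the (real-analytic) data and the fact that the quadratic coefficient $C$ stays positive away from $\alpha_0(S)$. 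I expect this uniformity to be the only genuine obstacle; once it is in place the chain holds on a fixed perimeter-neighbourhood of $\Omega_0$, which proves the strict local maximality and completes the argument.
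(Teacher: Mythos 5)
Your proof is correct and follows essentially the same route as the paper: both reduce the perimeter constraint to the fixed-area case of Theorem~\ref{thm:local} by scaling (using the isoperimetric inequality for triangles to produce a scaling factor in $(0,1]$) and then conclude with the strict area-monotonicity of Proposition~\ref{prop:monotone_S}. The only cosmetic difference is the direction of scaling---you dilate the fixed-perimeter triangle up to area $S$, whereas the paper shrinks the area-$S$ triangles down to the fixed perimeter---and your explicit use of the identity $\lambda_1^\alpha(\rho\,\Omega)=\rho^{-2}\lambda_1^{\rho\alpha}(\Omega)$, together with the remark on uniformity of the constants in~\eqref{eq:ev_estimate} as the rescaled parameter varies near $\alpha$, spells out a step that the paper leaves implicit.
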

\begin{proof}
	Let the triangle $\Omg_{a,c}$ be defined as before with the perimeter $|\p\Omg_{a,c}|$. We assume that $\Omg_{a,c}$ is not equilateral. Let us introduce the parameter 
	\[	
	\gamma_{a,c} := \frac{|\p\Omg_0|}{|\p\Omg_{a,c}|}.
	\]
	By the isoperimetric inequality for triangles we infer that $\gamma_{a,c}\in(0,1)$.
	Clearly, the mapping 
	$\dR\times\dR_+\ni(a,c)\mapsto \gamma_{a,c}\Omg_{a,c}$
	 parameterises generic triangles of the same perimeter as $\Omg_0$. In particular, we have $|\gamma_{a,c}\Omg_{a,c}| \le |\Omg_0| = S$. Hence, for all $\aa\in(\aa_0,0)$ we have by Theorem~\ref{thm:local} that there exists an open neighbourhood $\cU\subset\dR^2$ of the point $(0, \sqrt{\tfrac{S}{\sqrt{3}}})$ such that for any $(a,c)\in\cU$ holds
	 \begin{equation}\label{eq:isop1}
	 	\lm_1^\aa(\gamma_{a,c}\Omg_{a,c}) \le \lm_1^\aa(\gamma_{a,c}\Omg_0).
	 \end{equation}
	 On the other hand by Proposition~\ref{prop:monotone_S}
	 \begin{equation}\label{eq:isop2}
		 \lm_1^\aa(\gamma_{a,c}\Omg_0) < \lm_1^\aa(\Omg_0).
	 \end{equation}
	 Combining the inequalities~\eqref{eq:isop1} and~\eqref{eq:isop2} we get the claim.
\end{proof}
\begin{Remark}\label{rem:large_coupling}
The local optimality result of Theorem~\ref{thm:local} is worth complementing by the consequence of the large coupling asymptotics for the Robin Laplacian on a triangle. It follows from~ \cite[Thm.~2.3]{11} that
\[
	\lambda_{a,c} 
	\underset{\alpha \rightarrow -\infty}{\sim} -\frac{\alpha^2}{ {\sin^{2}\frac{\theta_\star}{2}}},
\]	
where $\theta_\star \in (0,\frac{\pi}{3}]$ is the magnitude of the smallest angle of the triangle $\Omega_{a,c}$. Hence, 
for any couple of parameters $(a,c) \in \mathbb{R}\times (0, \infty)$
there exists $\alpha_\star = \alpha_\star(a,c) < 0$ such that $\lambda_{a,c}\leq \lambda_0$ for all $\alpha\le\alpha_\star$.
\end{Remark}
%


\section{Small couplings}\label{sec:small}
%
In this section we show the validity of Conjecture~\ref{Conj.triangle} 
for negative~$\aa$ having small absolute value and under certain restriction on the parameters $a,c$ characterising the general triangle. We work with the equivalent quadratic form on the equilateral triangle constructed in Subsection~\ref{sec:geom} and use the ground state for the Robin Laplacian on an equilateral triangle as the trial function in the variational definition of the lowest eigenvalue. 
This section is complemented by a numerical computation of 
a region of limitation of the usage of this trial function 
for the proof of Conjecture~\ref{Conj.triangle} with $\aa <0$.

\subsection{The ground state of the equilateral triangle as a trial function}
Using the unitary transform~$U_{a,c}$ from Subsection~\ref{sec:geom}, 
we know that 
\begin{equation}\label{variational.hat}
  \lambda_{a,c}(\alpha) :=  \lambda_1^\alpha(\Omega_{a,c}) =
  \inf_{\stackrel[\psi\not=0]{}{\psi \in H^1(\Omega_0)}}
  \frac{\displaystyle
  \hat{h}_{\alpha,a,c}[\psi]}
  {\displaystyle
  \ \|\psi\|_{\sii(\Omega_0)}^2}
  \,,
\end{equation}
where the form~$\hat{h}_{\alpha,a,c}$ is given in~\eqref{form.hat}.
Employing the ground state of the equilateral triangle~$\psi_0$ as a trial function in~\eqref{variational.hat},
we obtain the upper bound
\begin{equation*} 
  \lambda_{a,c}(\alpha) \leq  \lambda_0(\alpha)
  + \frac{\displaystyle
  \hat{h}_{\alpha,{a,c}}[\psi_0] - \hat{h}_{\alpha}[\psi_0] }
  {\displaystyle
  \|\psi_0\|_{\sii(\Omega_0)}^2},
\end{equation*}
where $\hat{h}_\alpha$ is the abbreviation for the quadratic form $\hat{h}_{\alpha,a_0,c_0}$.
Using~\eqref{symmetry} we get
\begin{equation*} 
\begin{aligned}
  \hat{h}_{\alpha,a,c}[\psi_0] &- \hat{h}_{\alpha}[\psi_0] 
  = \left(\frac{c^2\sqrt{3}}{S} + \frac{S}{\sqrt{3}c^2}+\frac{a^2}{\sqrt{3}S}-2\right)  \|\partial_1\psi_0\|_{\sii(\Omega_0)}^2  \\
  &+ \alpha \,\left(\mbox{$\sqrt{\frac{\sqrt{3}}{S}}$} c 
 + \mbox{$\sqrt{\frac{\sqrt{3}}{S}}$}\frac{\sqrt{S^2+c^2(a+c)^2}}{2c} 
  +  \mbox{$\sqrt{\frac{\sqrt{3}}{S}}$} \frac{\sqrt{S^2+c^2(-a+c)^2}}{2c}
  - 3
  \right)
 \frac{\|\psi_0\|_{\sii(\partial\Omega_0)}^2}{3}
  \,.
 \end{aligned} 
\end{equation*}
Hence, for given $\aa < 0$, $a\in\dR$ and $c> 0$, 
the isoperimetric inequality in Conjecture~\ref{Conj.triangle} holds if 
the difference $\hat{h}_{\alpha,a,c}[\psi_0] - \hat{h}_{\alpha}[\psi_0]$ is non-positive.

In the proof of the main result of this section we use the following well-known geometric fact, which we recall for the convenience of the reader.
\begin{Proposition}[{\cite[Sec.~4]{12} or~\cite[\S~2.2]{K}}]\label{isoperimetric}
\
\begin{itemize}
\item [{\rm (i)}] Among triangles having the same area, the equilateral is the unique minimiser of the perimeter.
\item [{\rm(ii)}] Among triangles having the same area and the same base, the isosceles is the unique minimiser of the perimeter.
\end{itemize} 
\end{Proposition}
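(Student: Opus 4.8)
The plan is to prove the two parts by essentially independent elementary arguments, handling part~(ii) by a convexity (equivalently, reflection) argument and part~(i) directly via Heron's formula combined with the arithmetic--geometric mean inequality; both routes deliver the uniqueness of the extremiser at no extra cost.

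For part~(ii) I would fix the base, placing it as the segment with endpoints $(-\ell/2,0)$ and $(\ell/2,0)$, where $\ell>0$ is the prescribed base length. Since the area~$S$ is fixed, the apex is constrained to the horizontal line at height $h:=2S/\ell$, say at the point $(x,h)$ with $x\in\Real$. The perimeter equals $\ell+f(x)$, where
\[
  f(x):=\sqrt{(x+\ell/2)^2+h^2}+\sqrt{(x-\ell/2)^2+h^2}
\]
is the sum of the two remaining side lengths. Each summand has the form $\sqrt{t^2+h^2}$ with $t$ an affine function of $x$; since $h>0$ the map $t\mapsto\sqrt{t^2+h^2}$ has positive second derivative and is therefore strictly convex, and so is each summand. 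Hence $f$ is strictly convex; it is moreover even, $f(x)=f(-x)$. A strictly convex even function attains its minimum at the unique point $x=0$, which corresponds exactly to the isosceles triangle, proving~(ii). Equivalently, one may note that the level sets $\{AB+AC=\text{const}\}$ form a family of confocal ellipses with foci at the base endpoints, and the smallest such ellipse meeting the line of admissible apices is tangent to it at the point on the perpendicular bisector of the base.

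For part~(i) I would invoke Heron's formula: writing $a,b,c$ for the side lengths and $s:=(a+b+c)/2$ for the semiperimeter, the area satisfies $S^2=s(s-a)(s-b)(s-c)$. For a genuine triangle the three factors $s-a,s-b,s-c$ are positive and sum to~$s$, so the arithmetic--geometric mean inequality gives $(s-a)(s-b)(s-c)\le(s/3)^3$, with equality if and only if $s-a=s-b=s-c$, that is $a=b=c$. Hence $S^2\le s^4/27$, so $s\ge 3^{3/4}\sqrt{S}$, with equality precisely for the equilateral triangle. Since the perimeter is~$2s$, this shows that the equilateral triangle is the unique minimiser of the perimeter among all triangles of area~$S$, establishing~(i).

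There is no genuine obstacle here, since the statement is classical; the only points requiring care are the equality analyses that yield uniqueness --- strict (rather than merely weak) convexity of each summand of~$f$ for part~(ii), and the AM--GM equality condition together with the strict positivity of $s-a,s-b,s-c$ guaranteed by the triangle inequality for part~(i). If a single unified argument were preferred, part~(i) could instead be deduced from part~(ii): a perimeter-minimiser of fixed area must be isosceles with respect to every choice of base (else~(ii) would strictly decrease the perimeter), and being isosceles with respect to all three bases forces the triangle to be equilateral; this route, however, additionally requires a compactness argument to secure existence of a minimiser (coercivity of the perimeter as the triangle degenerates at fixed area), which the Heron approach avoids.
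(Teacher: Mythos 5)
Your proposal is correct, but there is nothing in the paper to compare it against: the authors do not prove this proposition at all, they quote it as a classical fact with citations to Svrtan--Veljan and to Kazarinoff's \emph{Geometric Inequalities}. What you have done is supply a self-contained elementary proof where the paper settles for a reference, and both of your arguments are sound. In part~(ii), placing the apex on the line $y=h$ with $h=2S/\ell$ and observing that each side length $\sqrt{(x\pm\ell/2)^2+h^2}$ is strictly convex in $x$ (second derivative $h^2/((x\pm\ell/2)^2+h^2)^{3/2}>0$) does the job; note that the potentially delicate claim ``a strictly convex even function attains its minimum uniquely at $x=0$'' needs no compactness, since for $x\neq0$ one has directly $f(0)<\tfrac12\bigl(f(x)+f(-x)\bigr)=f(x)$ by strict midpoint convexity and evenness. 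In part~(i), Heron's formula plus AM--GM applied to the three positive numbers $s-a$, $s-b$, $s-c$ (which sum to $s$) gives $S^2\le s^4/27$ with equality precisely when $a=b=c$, and since the equilateral triangle of area $S$ attains $s=3^{3/4}\sqrt{S}$, uniqueness of the minimiser follows; your equality analysis is complete. Your closing remark is also apt: deducing (i) from (ii) by ``isosceles with respect to every base'' would additionally require an existence (coercivity/compactness) argument for a minimiser, which the Heron route bypasses --- this is exactly the kind of hidden step that makes the direct algebraic proof preferable for a self-contained treatment.
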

Denote the perimeter of the triangle $\Omega_{a,c}$ by 
$l(a) := |\partial \Omega_{a,c}|$ and recall 
the explicit formula~\eqref{explicit}. 
As a consequence of Proposition~\ref{isoperimetric}\,(ii), we obtain 
\begin{equation}\label{eq:iso_triangle_2}
 \underset{a\in{\mathbb{R}}}{\min}\, l(a)= l(0)=  2c + 2 \sqrt{c^2+ \frac{S^2}{c^2}}. 
\end{equation}

Now we are in position to formulate and prove the main result of this section.
\begin{Theorem}\label{Thm.small}
Let $A,B$ and~$M$ be positive numbers such that $B > A$.
There exists a constant $\alpha_{\rm c} =\alpha_{\rm c}(M, A, B,S) < 0$ such that
$$
  \lambda_{a,c} < \lambda_0
$$ 
for all $\alpha\in [\alpha_{\rm c}, 0)$, $|a|\le M$ and $c\in[A,B]$ such that $(a,c)\ne \left(0,\sqrt{\tfrac{S}{\sqrt{3}}}\right)$.  
\end{Theorem}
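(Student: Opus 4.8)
The plan is to insert the equilateral ground state $\psi_0$, normalised so that $\|\psi_0\|_{\sii(\Omega_0)}=1$, into the variational characterisation~\eqref{variational.hat} and to show that the form difference $\hat h_{\alpha,a,c}[\psi_0]-\hat h_\alpha[\psi_0]$ evaluated above is \emph{strictly negative} throughout the region; since $\hat h_\alpha[\psi_0]=\lambda_0$, this gives $\lambda_{a,c}\le\lambda_0+\big(\hat h_{\alpha,a,c}[\psi_0]-\hat h_\alpha[\psi_0]\big)<\lambda_0$. Writing that difference as $P(a,c)\,\|\partial_1\psi_0\|^2+\tfrac{\alpha}{3}Q(a,c)\,\|\psi_0\|_{\sii(\partial\Omega_0)}^2$, where $P(a,c):=\tfrac{\sqrt3 c^2}{S}+\tfrac{S}{\sqrt3 c^2}+\tfrac{a^2}{\sqrt3 S}-2$ and $Q(a,c)$ is the bracket multiplying $\alpha$, I would first record the sign of each coefficient. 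The arithmetic--geometric mean inequality gives $\tfrac{\sqrt3 c^2}{S}+\tfrac{S}{\sqrt3 c^2}\ge2$, so $P\ge0$ with equality only at $(a,c)=(0,c_0)$. Comparing $Q$ with the explicit perimeter~\eqref{explicit} shows $Q(a,c)=\tfrac12\sqrt{\tfrac{\sqrt3}{S}}\big(|\partial\Omega_{a,c}|-|\partial\Omega_0|\big)$, so Proposition~\ref{isoperimetric}(i) yields $Q\ge0$, again with equality only for the equilateral triangle. As $\alpha<0$, the $Q$-term is the helpful (non-positive) one and the $P$-term the obstructive (non-negative) one.

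Dividing by $\|\psi_0\|_{\sii(\partial\Omega_0)}^2>0$ and by $P(a,c)>0$ (legitimate away from the equilateral point), strict negativity of the difference is equivalent to $\rho(\alpha)<\tfrac13\,Q(a,c)/P(a,c)$, where $\rho(\alpha):=\|\partial_1\psi_0\|^2/\big(|\alpha|\,\|\psi_0\|_{\sii(\partial\Omega_0)}^2\big)$. Hence it suffices to produce a finite constant $C=C(M,A,B,S)$ with $P(a,c)\le C\,Q(a,c)$ on the region and a threshold $\alpha_{\rm c}<0$ such that $\rho(\alpha)<\tfrac{1}{3C}$ for all $\alpha\in[\alpha_{\rm c},0)$.

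The uniform geometric bound $P\le C\,Q$ is the heart of the matter. Away from $(0,c_0)$ on the compact set $\{|a|\le M,\ c\in[A,B]\}$ one has $Q\ge\delta_0>0$ by continuity and compactness while $P$ is bounded, so the ratio is controlled there. Near $(0,c_0)$ both functions are smooth and vanish together with their first derivatives, and each has a positive-definite Hessian: for $P$ this follows from the expansion $P(a,c)=\tfrac{1}{\sqrt3 S}\big(a^2+12(c-c_0)^2\big)+O\big((|a|+|c-c_0|)^3\big)$, while for $Q$ it follows because the perimeter is even in $a$ (so the Hessian at $(0,c_0)$ is diagonal) with strictly positive second derivatives in both the $a$- and the $c$-directions. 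The ratio of two positive-definite quadratic forms being bounded, $C:=\sup P/Q$ is finite on the region. The main obstacle is precisely this non-degeneracy: one must rule out that the perimeter excess $Q$ flattens faster than $P$ along some direction into the equilateral point.

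It remains to check $\rho(\alpha)\to0$ as $\alpha\to0^-$. By symmetry $\|\partial_1\psi_0\|^2=\tfrac12\|\nabla\psi_0\|^2$, and from $\lambda_0=\|\nabla\psi_0\|^2+\alpha\|\psi_0\|_{\sii(\partial\Omega_0)}^2$ together with the Hadamard identity $(\lambda_0)_\alpha'=\|\psi_0\|_{\sii(\partial\Omega_0)}^2$ used in the proof of Theorem~\ref{thm:local} one gets $\|\nabla\psi_0\|^2=\lambda_0-\alpha(\lambda_0)_\alpha'$. The small-coupling asymptotics $\lambda_0(\alpha)=\alpha\,|\partial\Omega_0|/S+O(\alpha^2)$ recalled in the introduction give $\lambda_0/\alpha\to|\partial\Omega_0|/S$ and $(\lambda_0)_\alpha'\to|\partial\Omega_0|/S$, whence $\|\nabla\psi_0\|^2/\alpha=\lambda_0/\alpha-(\lambda_0)_\alpha'\to0$; thus $\|\partial_1\psi_0\|^2=o(|\alpha|)$ while $\|\psi_0\|_{\sii(\partial\Omega_0)}^2\to|\partial\Omega_0|/S>0$, so $\rho(\alpha)\to0$. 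Choosing $\alpha_{\rm c}<0$ with $\rho(\alpha)<\tfrac{1}{3C}$ on $[\alpha_{\rm c},0)$ then yields the desired strict inequality $\lambda_{a,c}<\lambda_0$ for every $(a,c)\ne(0,c_0)$ in the region.
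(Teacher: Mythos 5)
Your proposal is correct, and its opening and closing steps coincide with the paper's proof: the paper also inserts the normalised equilateral ground state into \eqref{variational.hat}, arrives at exactly your decomposition (its function $z(a,c)$ is your $Q/P$ and its $g_1(\alpha)$ is your $3\rho(\alpha)$), and proves $g_1(\alpha)\to 0$ as $\alpha\to 0^-$ by the same Hadamard-identity argument. Where you genuinely diverge is the treatment of the neighbourhood of $(0,c_0)$, where $P$ and $Q$ both vanish and the ratio $Q/P$ degenerates to $0/0$. The paper sidesteps this point entirely: it excises a small disk $\mathcal{B}_\varepsilon$ around $(0,c_0)$, obtains a positive minimum of $z$ on the compact complement, handles the punctured disk by invoking the local optimality result of Theorem~\ref{thm:local} (and hence all of the holomorphic perturbation machinery of Section~\ref{Sec.derivative}), and finally sets $\alpha_{\rm c}:=\max\{\hat\alpha_{\rm c},\alpha_0\}$. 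You instead prove the uniform bound $P\le C\,Q$ up to the equilateral point by comparing Hessians, which makes the proof of this theorem self-contained — no appeal to Theorem~\ref{thm:local} — at the price of a geometric non-degeneracy computation. That computation is the one step you assert rather than verify: evenness of the perimeter in $a$ does diagonalise the Hessian of $Q$ at $(0,c_0)$, but the strict positivity of the diagonal entries must be checked, since strict minimality alone only gives semi-definiteness. It is true: differentiating \eqref{explicit} twice at $(a,c)=(0,c_0)$ gives
\begin{equation*}
  \partial_a^2\, |\partial\Omega_{a,c}|\Big|_{(0,c_0)} = \frac{3^{5/4}}{4\sqrt{S}} > 0
  \qquad\text{and}\qquad
  \partial_c^2\, |\partial\Omega_{a,c}|\Big|_{(0,c_0)} = \frac{9}{c_0} > 0 \,,
\end{equation*}
so the Hessian of $Q$ is positive definite, your constant $C$ is finite, and the argument closes. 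With this computation supplied your proof is complete, and it is in fact slightly stronger than the paper's at this point, since it shows that $\inf_{([-M,M]\times[A,B])\setminus\{(0,c_0)\}} Q/P$ is positive, a fact the paper never establishes (nor needs, thanks to Theorem~\ref{thm:local}).
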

\begin{proof}
Throughout this proof the ground state $\psi_0$ for the equilateral triangle is assumed to be normalised to~$1$ in $\sii(\Omega_0)$.
Let us introduce three auxiliary functions
\[
z(a,c): = \dfrac{f_1(a) - 3}{\frac{c^2\sqrt{3}}{S} + \frac{S}{\sqrt{3}c^2}+\frac{a^2}{\sqrt{3}S}-2}
, \qquad
f_1(a): = \sqrt{\frac{\sqrt{3}}{S}} \ \frac{l(a)}{2}
\qquad\text{and}\qquad
g_1(\alpha) := \frac{3 \|\nabla \psi_0\|^2_{L^2(\Omega_0)}}{-2\alpha \, \|\psi_0\|_{\sii(\partial\Omega_0)}^2},
\] 
where $l(a)$ is the notation for the perimeter of~$\Omega_{a,c}$
introduced above~\eqref{eq:iso_triangle_2}
and explicitly given in~\eqref{explicit}.
Note that the denominator of~$z$ is positive by the Cauchy inequality.
It is easy to show the limiting properties
\[
\underset{(a,c)\in{\mathbb{R}}\times (0,\infty)}{\inf} z(a,c) =  \lim\limits_{a\rightarrow \pm\infty} z(a,c)=\lim\limits_{c\rightarrow \infty}z(a,c)= 0.
\]
Using the symmetry of the equilateral triangle, we have the following equivalences
\begin{equation*}
\hat{h}_{\alpha,a,c}[\psi_0] - \hat{h}_{\alpha}[\psi_0] \leq 0 \qquad\Longleftrightarrow\qquad
\frac{3\|\partial_1\psi_0\|_{\sii(\Omega_0)}^2}{-\alpha\|\psi_0\|_{\sii(\partial\Omega_0)}^2}
\leq z(a,c)
\qquad
\Longleftrightarrow\qquad g_1(\alpha) \le z(a,c).
\end{equation*}

Let us now look at the limiting properties of~$g_1$ as $\alpha \to 0$.
By the normalisation of $\psi_0$, 
we have $ \lambda_0= \|\nabla\psi_0\|^2_{L^2(\Omega_0)} + \alpha \|\psi_0\|^2_{L^2(\partial\Omega_0)}$.
Note that $\lambda_0=\lambda_0(\alpha)$ 
converges in the limit $\alpha\rightarrow0$ to the first eigenvalue of the Neumann Laplacian on $\Omega_0$, which is equal to zero.
Moreover, one has (see, \eg, \cite[Chap.~4, Eq.~(4.12)]{AH})
\begin{equation}\label{DN1}
\dfrac{\der\lambda_0}{\der\alpha} 
= \|\psi_0(\alpha)\|^2_{L^2(\partial\Omega_0)} > 0, 
\end{equation}
for every~$\alpha$, 
where by writing $\psi_0(\alpha)$ we indicate the dependence of $\psi_0$ on the boundary parameter $\alpha$.
It implies that
\begin{equation}\label{DN2}
\dfrac{\der\lambda_0}{\der\alpha}\bigg|_{\alpha=0} = \lim\limits_{\alpha \rightarrow 0}\|\psi_0(\alpha)\|^2_{L^2(\partial\Omega_0)}=\dfrac{|\partial \Omega_0|}{|\Omega_0|}.
\end{equation}
In addition, by the definition of the derivative of $\lambda_0$ with respect to $\alpha$ at the point $\alpha = 0$, we get
\begin{equation}\label{1}
\dfrac{|\partial \Omega_0|}{|\Omega_0|} =
\lim\limits_{\alpha \rightarrow 0}\frac{\lambda_0(\alpha)-\lambda_0(0)}{\alpha}= \lim\limits_{\alpha \rightarrow 0} \frac{\|\nabla\psi_0(\alpha)\|^2_{L^2(\Omega_0)} + \alpha \|\psi_0(\alpha)\|^2_{L^2(\partial\Omega_0)}}{\alpha} . 
\end{equation}
Combining \eqref{1} and \eqref{DN2}, we obtain 
\begin{equation*}
\lim\limits_{\alpha \rightarrow 0} g_1(\alpha)=0.
\end{equation*}

By Proposition~\ref{isoperimetric}\,(i), 
we have the strict inequality
$\underset{a\in{\mathbb{R}}}{\min} f_1(a) > 3 $ 
unless $a = a_0 = 0$ and $c = c_0 = \sqrt{\frac{S}{\sqrt{3}}}$
(in which case we have equality). 
It follows that $z(a,c) > 0$ unless $ a = a_0$ and $c = c_0$.
Moreover, $(a,c) \mapsto z(a,c)$ is a continuous function.
Let $\varepsilon > 0$ be arbitrary and we denote by $\mathcal{B}_\varepsilon$ the open disk of radius~$\varepsilon$ with the centre $(0,c_0)$.
Hence, for any $M > 0$ and $B > A > 0$  there exists a constant $\hat\alpha_{\rm c} = \hat\alpha_{\rm c}(M,A,B,\varepsilon) < 0$ such that  for all $\alpha\in [\hat\alpha_{\rm c}, 0)$
one has 
\[
	0< g_1(\alpha)< \underset{([-M, M]\times [A,B])\setminus\mathcal{B}_\varepsilon}{\min} z(a,c),
\] 
which implies that
\[
	\lambda_{a,c} < \lambda_0,\qquad
	\forall (a,c) \in ([-M, M]\times [A,B])\setminus\mathcal{B}_\varepsilon .
\]
By Theorem~\ref{thm:local}, 
for a sufficiently small $\varepsilon > 0$ and for $\alpha \in [\alpha_0,0)$, 
we have
\[
\lambda_{a,c} < \lambda_0,\qquad\forall (a,c) \in 
\mathcal{B}_\varepsilon \setminus \{(0,c_0)\}.  
\]
Choosing $\alpha_{\rm c} := \max\{\hat\alpha_{\rm c},\alpha_0\}$
we get for all $\alpha\in [\alpha_{\rm c},0)$ that
\[
\lambda_{a,c} < \lambda_0,\qquad \forall (a,c) \in \left([-M, M]\times [A,B]\right)\setminus\{(0,c_0)\}. 
\]
This concludes the proof of the theorem.
\end{proof}

\subsection{Limitations of the trial function}
In the rest of this section we restrict to the special case 
$c=S=\frac{1}{\sqrt{3}}$ and perform an analysis of 
the region in the $(\aa,a)$-plane for which the present choice 
of the ground state of the equilateral triangle 
as a trial function fails to prove Conjecture~\ref{Conj.triangle} 
with $\aa <0$.
In this special case, the ground state~$u_0$ 
of the equilateral triangle~\eqref{eq:ef} 
and its partial derivative with respect to the first variable are given by
\[
\begin{aligned}
    u_0
    &=
    \cosh(L-2Ky)+ 2 \cosh(M+Ky) \cosh(\sqrt{3}Kx),\\
	\partial_1 u_0 & 
	= 
	2\sqrt{3}K\cosh(Ky + M)\sinh(K\sqrt{3}x) .	
\end{aligned}
\]
Hence, we can express the squared norm of $\partial_1 u_0$ as follows:
\begin{equation*}
\begin{aligned}
\|\partial_1 u_0\|^2 &=12K^2\int_0^1\int_{\frac{y}{\sqrt{3}}-\frac{1}{\sqrt{3}}}^{-\frac{y}{\sqrt{3}}+\frac{1}{\sqrt{3}}} \cosh^2(Ky + M)\sinh^2(K\sqrt{3}x)\der x \der y \\
&= \frac{\sqrt{3}}{8} \big( -4-8K^2+4\cosh(2K)+\cosh(2K-2M)+4\cosh(2M)\\
&\qquad-5\cosh(2K+2M)+8K\sinh(2M)+4K\sinh(2K+2M)\big) \, .
\end{aligned}
\end{equation*}
Similarly, the square of the norm of the trace of~$u_0$ 
on $\partial\Omega_0$ can be computed as follows:
\begin{equation*}
\begin{aligned}
&\|u_0\|^2_{L^2(\partial\Omega_0)} = 3\|u_0\|^2_0 = 3\int_{-\frac{1}{\sqrt{3}}}^{\frac{1}{\sqrt{3}}} [\cosh(L)+ 2\cosh(M)\cosh(\sqrt{3}Kx)]^2 \der x  \\
&\quad= \frac{\sqrt{3}}{K} \big( 3K+K\cosh(2L)+2K\cosh(2M)+8\cosh(L)\cosh(M)\sinh(K)+2\cosh^2(M)\sinh(2K)\big) \, .
\end{aligned}
\end{equation*}
Using the above expressions for 
$\|\partial_1 u_0\|^2$ and $\|u_0\|^2_{L^2(\partial\Omega_0)}$
we plot for $c = S = \frac{1}{\sqrt{3}}$ in
Figure~\ref{Fig.test1} the region where  
$
  \delta(\alpha,a) :=
  \hat{h}_{\alpha,a,\frac{1}{\sqrt{3}}}[u_0] - \hat{h}_{\alpha}[u_0]
$ 
is non-positive.
This plot demonstrates that Conjecture~\ref{Conj.triangle} 
for negative $\alpha$ holds
for every~$a$ by choosing~$|\alpha|$ sufficiently small
(for larger~$a$, smaller~$|\alpha|$ needed).
However, it also shows the limitations of the present 
choice of the trial function 
(for any negative~$\alpha$, there exists a sufficiently large positive~$a$
such that the difference $\delta(\alpha,a)$ is positive).
In summary, for eccentric triangles (\ie~$a$ large), 
a different choice of trial function is necessary.

\begin{figure}[h]
\begin{center}
\includegraphics[width=0.45\textwidth]{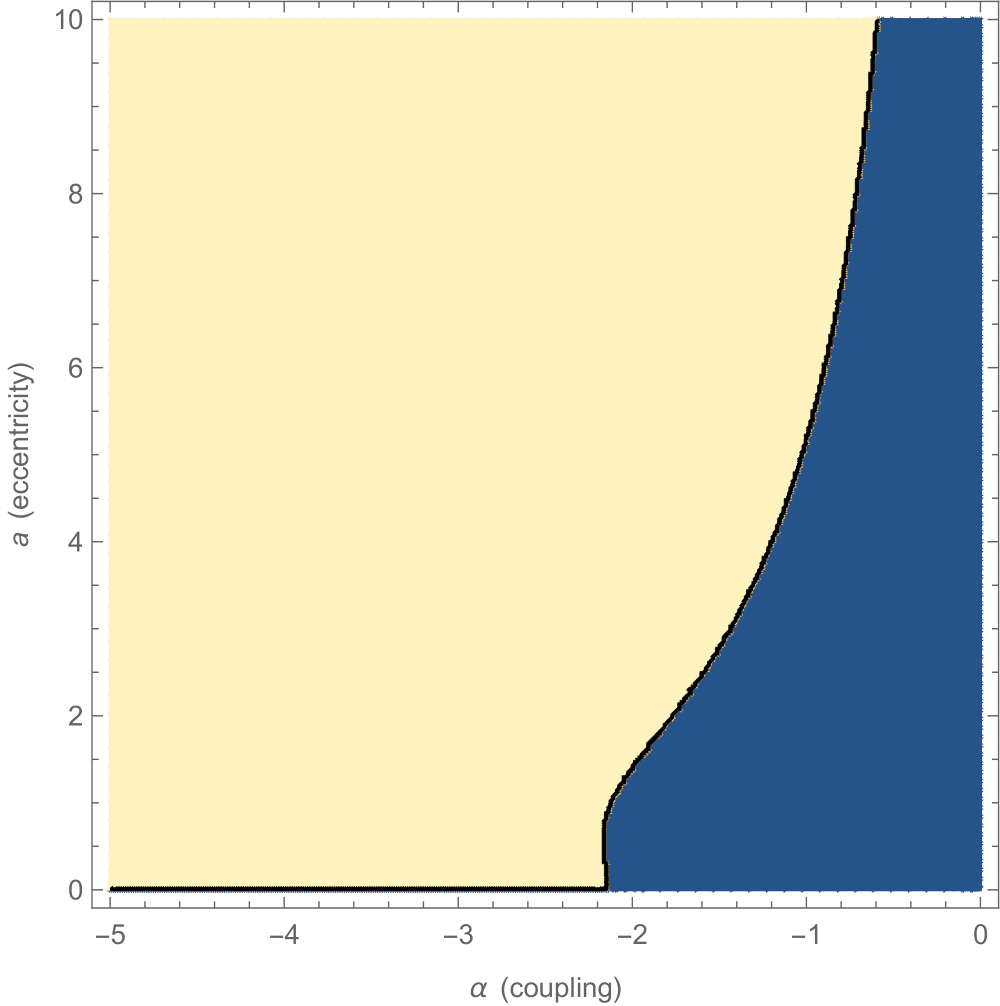}
\caption{\small The blue colour indicates 
the region of validity of Conjecture~\ref{Conj.triangle}: \newline
the equilateral triangle eigenfunction
as a trial function.}
\label{Fig.test1}
\end{center}
\end{figure}
%

\section{Large couplings}\label{sec:large}
%
In this section we show
the validity of Conjecture~\ref{Conj.triangle}
for negative $\alpha$ with larger values of~$|\alpha|$. 
To this aim one needs to use different trial functions, 
which better reflect 
the behaviour of the eigenfunction in a general triangle
in this large coupling limit. 
As in the preceding section,
analytical results are supplemented by numerical computations of 
the region of limitations of these trial functions
to establish the validity of
Conjecture~\ref{Conj.triangle} with negative $\aa$.  

\subsection{The Neumann ground state as a trial function}
We start with the constant function as a trial function.
\begin{Theorem}\label{Thm.large}
For any $\alpha <0$, there exist positive constants 
$a_1=a_1(\alpha)$, $c_1 = c_1(\alpha)$ and $c_2=c_2(\alpha)$ 
such that 
$$ 
  \lambda_{a,c} < \lambda_0
$$ 
holds under any of the following restrictions:
\begin{enumerate}
\item[(i)]
$|a| > a_1$ and $c >0$;
\item[(ii)]
$a\in\mathbb{R}$ and $c> c_1$;
\item[(iii)]
$a\in \mathbb{R}$ and $c< c_2$.
\end{enumerate}
\end{Theorem}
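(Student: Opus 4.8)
The plan is to use the constant function as a trial function in the variational characterization~\eqref{variational.hat} and exploit the fact that the Robin form evaluated on constants reduces to a purely geometric quantity. Since for a constant $\psi \equiv 1$ the gradient terms in $\hat h_{\alpha,a,c}$ vanish, we obtain
\begin{equation*}
  \lambda_{a,c} \le \frac{\hat h_{\alpha,a,c}[1]}{\|1\|_{\sii(\Omega_0)}^2}
  = \frac{\alpha \, |\partial\Omega_{a,c}|}{|\Omega_{a,c}|}
  = \frac{\alpha \, l(a)}{S}\,,
\end{equation*}
using that $U_{a,c}$ is unitary so that the boundary weights in~\eqref{form.hat} simply reassemble the true perimeter of $\Omega_{a,c}$ (one can also read this off directly from the small-coupling asymptotics, or verify it by summing the three boundary coefficients in~\eqref{form.hat} against $\|1\|_k^2$). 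Since $\alpha<0$, the bound $\lambda_{a,c}\le \alpha\, l(a)/S$ is most effective precisely when the perimeter $l(a)=|\partial\Omega_{a,c}|$ is large.

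The key comparison is then with $\lambda_0 = \lambda_0(\alpha)$, which is a fixed negative number for each $\alpha<0$. It therefore suffices to find geometric regimes forcing $\alpha\, l(a)/S < \lambda_0$, i.e.\ $l(a) > S\lambda_0/\alpha =: P_\alpha$, where $P_\alpha>0$ since both $\lambda_0$ and $\alpha$ are negative. Inspecting the explicit formula~\eqref{explicit} for $l(a)=|\partial\Omega_{a,c}|$, one sees that the perimeter diverges to $+\infty$ in each of the three regimes: as $|a|\to\infty$ with $c$ fixed (the apex runs off horizontally), as $c\to\infty$ (the base becomes long), and as $c\to 0^+$ (the height $b=S/c$ blows up). Concretely, for part~(i) I would use that $\sqrt{S^2/c^2+(a+c)^2}+\sqrt{S^2/c^2+(a-c)^2}\ge |a+c|+|a-c|\ge 2|a|$, so $l(a)\ge 2|a|\to\infty$; for part~(ii) the term $2c$ already gives $l(a)\ge 2c\to\infty$; and for part~(iii) the terms under the square roots contain $S^2/c^2\to\infty$. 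In each case one solves the resulting explicit inequality $l(a)>P_\alpha$ for the threshold $a_1(\alpha)$, $c_1(\alpha)$, or $c_2(\alpha)$, respectively. Because $P_\alpha$ depends only on $\alpha$ (and $S$), these thresholds are well-defined functions of $\alpha$, as claimed; strictness of the final inequality is inherited from the strict inequality $l(a)>P_\alpha$.

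I do not expect genuine obstacles here: the argument is an elementary monotonicity/coercivity estimate on the explicit perimeter formula, and the only care needed is in making the three divergence estimates quantitative enough to extract explicit thresholds. The one point meriting attention is simply confirming that the constant-function Rayleigh quotient is exactly $\alpha |\partial\Omega_{a,c}|/|\Omega_{a,c}|$ after the unitary transform—this is where one must check that the three boundary coefficients in~\eqref{form.hat}, when evaluated against the norms $\|1\|_k^2$ of the constant trace on $\Gamma_0^{(k)}$, telescope back to the true perimeter rather than leaving a residual geometric factor. Once that identity is in hand, the theorem is immediate; the more delicate large-coupling regime where the perimeter is bounded (eccentric but not extreme triangles) is precisely where the constant trial function fails and a truncated sector eigenfunction must be used instead, which I would treat separately.
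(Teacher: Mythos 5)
Your proposal is correct and follows essentially the same route as the paper: both use the constant (characteristic) function as a trial function in~\eqref{variational.hat}, reduce the claim to the perimeter inequality $|\partial\Omega_{a,c}| > S\lambda_0/\alpha$, and conclude from the divergence of the perimeter in the three regimes $|a|\to\infty$, $c\to\infty$, $c\to 0^+$. The only cosmetic difference is that you quantify this divergence by elementary bounds such as $|a+c|+|a-c|\ge 2|a|$, yielding explicit thresholds, whereas the paper invokes Proposition~\ref{isoperimetric}\,(ii) and limit arguments; both are equally valid.
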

\begin{proof}
It follows from the 
variational characterisation~\eqref{variational.hat} with
the trial function being the characteristic function~$\mathbbm{1}$ 
of the triangle $\Omega_0$ that
\begin{equation*}
\lambda_{a,c} 
\le  \frac{\hat{h}_{\alpha, a,c}[\mathbbm{1}]}{\|\mathbbm{1}\|^{2}}
= \alpha  f_2(a,c) 
\qquad \mbox{with} \qquad
f_2(a, c):= \frac{|\partial\Omega_{a,c}|}{S} .
\end{equation*}
Recall the explicit formula~\eqref{explicit} 
for the perimeter of~$\Omega_{a,c}$ and that we are dealing
with the area constraint, so that $S = |\Omega_0|$.
Obviously, the desired inequality $\lambda_{a,c}< \lambda_0$ is satisfied if
\begin{equation}\label{eq}
\begin{aligned}
f_2(a,c) > \frac{\lambda_0}{\alpha} .
\end{aligned}
\end{equation}
By Proposition~\ref{isoperimetric}\,(ii), 
\[ 
f_2(a,c)\geq\underset{a\in{\mathbb{R}}}{\min} f_2(a,c)= f_2(0,c)= \frac{2}{S}\left(c+\frac{\sqrt{c^4+S^2}}{c}\right)\, .
\]
Since
$$
  \lim\limits_{c\rightarrow 0^+} \min_{a\in\mathbb{R}} f_2(a,c) 
  = +\infty
  \,,
$$ 
there exists $c_2 =c_2 (\alpha)$ such that $\lambda_{a,c} < \lambda_0$ 
for all $(a,c)\in \mathbb{R}\times (0,c_1)$;
this establishes condition~(iii).
At the same time, since
$$
  \lim\limits_{|a|\rightarrow \infty} \min_{c >0} f_2(a,c)= \lim\limits_{c \rightarrow \infty}\min_{a\in\mathbb{R}} f_2(a,c) = +\infty  
$$ 
for any fixed $\alpha <0$,
there exist $ a_1=a_1(\alpha) >0, c_1=c_1(\alpha) >0$ such that 
$ f_2(a,c) > \frac{\lambda_0}{\alpha}$  
for all $(|a|,c) \in (a_1, \infty)\times(0,\infty) $ 
or for all $(a,c)\in \mathbb{R}\times (c_1,\infty)$;
this establishes conditions~(i) and~(ii).
\end{proof}

\begin{Remark}
The constants~$a_1$ and~$c_1$ (respectively, the constant~$c_2$)
obtained in our proof above 
tend to~$+\infty$ (respectively, tends to~$0$)
as $\alpha \to -\infty$.
Indeed, it is enough to recall~\eqref{eq} and notice that
$\frac{\lambda_0}{\alpha}\rightarrow +\infty$ as $\alpha\rightarrow-\infty$
(\cf~ Remark~\ref{rem:large_coupling}), 
while $f_2(a,c)$ is independent of~$\alpha$.
\end{Remark}
\begin{figure}[h]
\begin{center}
\includegraphics[width=0.45\textwidth]{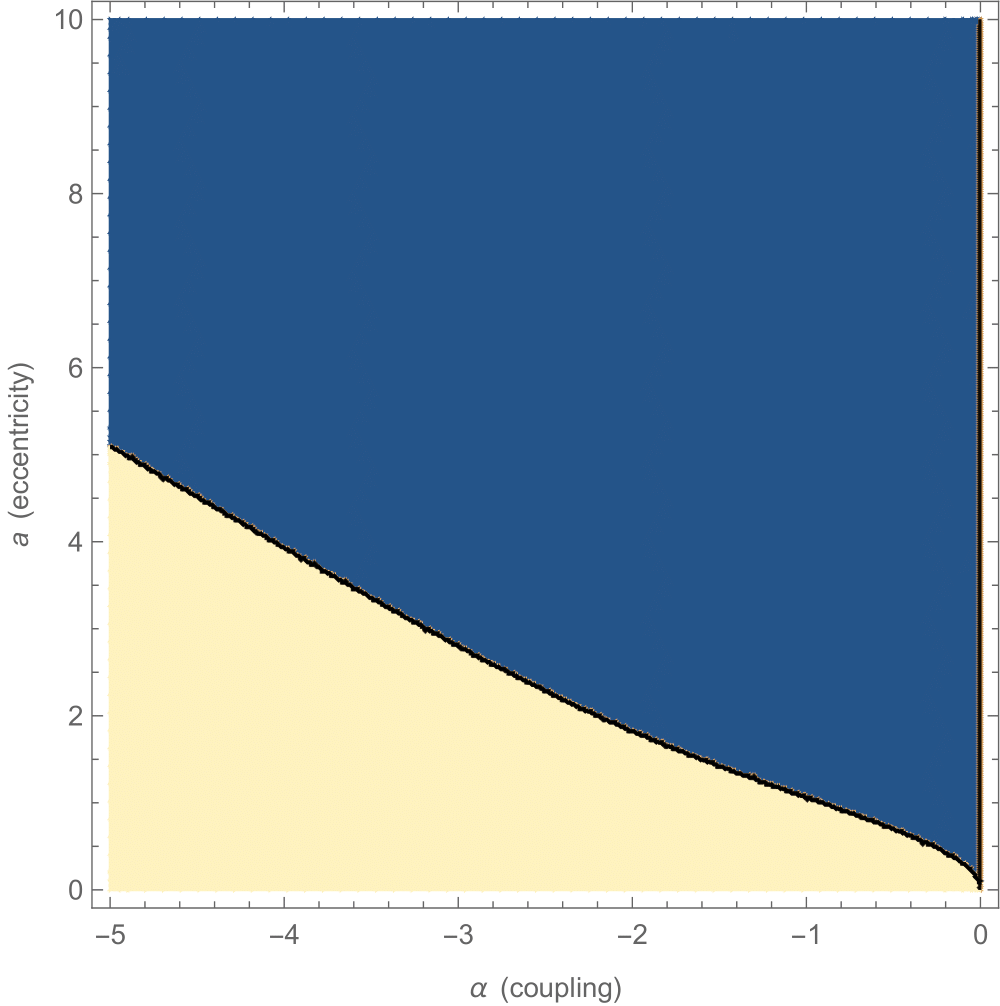}
\caption{\small 
The blue colour indicates the region 
of validity of Conjecture~\ref{Conj.triangle}: \newline 
the constant function as a trial function.}
\label{Fig.test-wrong}
\end{center}
\end{figure}

The analysis based on the constant test function can be supplemented 
by a numerical evidence.
Figure~\ref{Fig.test-wrong} plots the region for $c = S=\frac{1}{\sqrt{3}}$ where $h_\alpha[\mathbbm{1}]/\|\mathbbm{1}\|_{\sii(\Omega_{a,\frac{1}{\sqrt{3}}})}^2 
- \lambda_0(\alpha)$ 
is non-positive.
Now, given any negative~$\alpha$, 
we are able to cover all sufficiently eccentric triangles.

\subsection{The ground state of a sector as a trial function}
The ground state of the Robin Laplacian on a convex sector with a negative boundary parameter is explicitly known 
(see~\cite[Ex.~2.5, Lem.~2.6]{LP08} and also~\cite[Thm.~2.3\,(b)]{KP18}).
 In the next theorem we use a truncation of this ground state as a trial function and find a sufficient condition
	in terms of the boundary parameter, the area of the triangle, the smallest angle and the length of the smaller
	side of the triangle adjacent to this angle for the isoperimetric inequality to hold.
	This theorem provides also a quantitative version of the observation made in Remark~\ref{rem:large_coupling}.
\begin{Theorem}\label{thm:large}
	Let the parameters $a\in\mathbb{R}$ and $c >0$ be such that the triangle $\Omega_{a,c}$ is not equilateral. 
	Let $\theta_\star\in (0,\pi/3)$ be the smallest angle of $\Omega_{a,c}$. Let $L' >0$ be the length of the smaller side of $\Omega_{a,c}$ adjacent to that angle. Assume that $\alpha <0$ is such that
	\begin{equation}\label{eq:condition}
		-4\alpha^2 + \frac{24\alpha}{\mbox{$\sqrt{\sqrt{3}S}$}}
		-\frac{36}{\sqrt{3}S} \ge
		-
		\frac{\alpha^2}{\sin^2(\theta_\star/2)}
		\big(1-2\exp(2\alpha L'\cot(\theta_\star/2))\big).
	\end{equation}
	Then the inequality 
	$$
	  \lambda_{a,c} < \lambda_0
	$$ 
	holds. In particular,
	for a fixed triangle $\Omega_{a,c}$ the condition~\eqref{eq:condition} holds for all $|\alpha|$ large enough
	and for a fixed $\alpha < 0$ this condition holds for all $\theta_\star$ small enough.
\end{Theorem}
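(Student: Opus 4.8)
The plan is to prove $\lambda_{a,c} < \lambda_0$ by squeezing the two eigenvalues against the two quantities appearing in \eqref{eq:condition}: I would bound $\lambda_0$ from \emph{below} by the left-hand side of \eqref{eq:condition} and $\lambda_{a,c}$ from \emph{above} by the right-hand side. Once these two one-sided bounds are secured, the hypothesis \eqref{eq:condition} furnishes the missing link and yields the chain $\lambda_0 \ge (\text{LHS}) \ge (\text{RHS}) \ge \lambda_{a,c}$. The final inequality will be \emph{strict} because the lower bound on $\lambda_0$ turns out to be strict; correspondingly it suffices to prove \eqref{eq:condition} with the weaker, purely local right-hand side rather than the sharper (but non-local) bound a trial function would actually deliver.

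For the lower bound on $\lambda_0$, I would start from the explicit value $\lambda_0 = -4K^2/(\sqrt{3}S)$ in \eqref{lambda0} together with the parametrisation \eqref{alpha}, where $K = \arctanh t + \arctanh\tfrac{t}{2}$ and $t = -\alpha\sqrt{\sqrt{3}S}/K \in (0,1)$. Multiplying the desired inequality $\lambda_0 \ge -4\alpha^2 + 24\alpha/\sqrt{\sqrt{3}S} - 36/(\sqrt{3}S)$ by $-\sqrt{3}S/4$ and writing $\beta := -\alpha\sqrt{\sqrt{3}S} = tK$, one checks that it is equivalent to the elementary bound $K \le \beta + 3$, i.e. $K(1-t) \le 3$. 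The latter follows from the estimate $\arctanh s \le s/(1-s^2)$ (immediate from the monotonicity of the integrand in $\arctanh s = \int_0^s(1-\sigma^2)^{-1}\,\der\sigma$), which gives $\arctanh t + \arctanh\tfrac{t}{2} \le 2/(1-t)$ and hence $K(1-t) \le 2 < 3$. The gap $2 < 3$ is exactly what makes the lower bound, and therefore the conclusion $\lambda_{a,c} < \lambda_0$, strict.

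For the upper bound on $\lambda_{a,c}$, I would feed a truncated sector ground state into the variational characterisation \eqref{variational} for $\Omega_{a,c}$. Placing the origin at the vertex carrying the smallest angle $\theta_\star$ and the bisector along the positive $x$-axis, the two sides meeting there lie on the edges of an infinite sector of opening $\theta_\star$, whose Robin ground state is $e^{kx}$ with $k = \alpha/\sin(\theta_\star/2)$ and Rayleigh value $E = -k^2 = -\alpha^2/\sin^2(\theta_\star/2)$ (see \cite[Ex.~2.5, Lem.~2.6]{LP08} or \cite[Thm.~2.3\,(b)]{KP18}), matching the prefactor on the right of \eqref{eq:condition} and Remark~\ref{rem:large_coupling}. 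Since the projection onto the bisector of the nearer endpoint of the opposite edge equals $L'\cos(\theta_\star/2)$, truncating $e^{kx}$ so that it vanishes before $x = L'\cos(\theta_\star/2)$ keeps the trial function inside $\Omega_{a,c}$ and makes the resulting bound depend on the local data $\theta_\star, L', \alpha$ alone. Evaluating the quadratic form produces $E$ plus a truncation error localised near the cut, where the squared state equals $e^{2kL'\cos(\theta_\star/2)} = \exp(2\alpha L'\cot(\theta_\star/2))$; bounding this error by $-2E\exp(2\alpha L'\cot(\theta_\star/2))$ yields precisely $\lambda_{a,c} \le E\big(1 - 2\exp(2\alpha L'\cot(\theta_\star/2))\big)$, the right-hand side of \eqref{eq:condition}.

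The main obstacle is the clean accounting of this truncation error: one must fix the cut-off profile and show that the gradient-of-cut-off term together with the cross term contribute no more than $-2E\exp(2\alpha L'\cot(\theta_\star/2))$, while bounding the $L^2$-norm of the trial function from below by essentially the full-sector value so the Rayleigh denominator does not amplify the error; producing \emph{exactly} the factor $2$ is the delicate point. Granting both one-sided bounds, \eqref{eq:condition} closes the argument. The two limiting assertions then follow by comparing asymptotics: for a fixed triangle and $\alpha \to -\infty$ the exponential vanishes, so the right-hand side behaves like $-\alpha^2/\sin^2(\theta_\star/2)$ while the left behaves like $-4\alpha^2$, and $\sin^2(\theta_\star/2) < 1/4$ forces \eqref{eq:condition} for all $|\alpha|$ large; for fixed $\alpha$ and $\theta_\star \to 0$ the factor $1 - 2\exp(2\alpha L'\cot(\theta_\star/2)) \to 1$ whereas $\alpha^2/\sin^2(\theta_\star/2) \to +\infty$, so the right-hand side tends to $-\infty$ and \eqref{eq:condition} holds as well.
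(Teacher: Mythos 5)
Your overall architecture is exactly right and matches the paper: bound $\lambda_0$ strictly from below by the left-hand side of \eqref{eq:condition} and $\lambda_{a,c}$ from above by the right-hand side, then squeeze. Your lower bound on $\lambda_0$ is complete and correct: the reduction, after multiplying by $-\sqrt{3}S/4$ and setting $\beta = -\alpha\sqrt{\sqrt{3}S} = tK$, to $K(1-t)\le 3$, and the estimate $\arctanh t + \arctanh\tfrac{t}{2} \le \tfrac{2}{1-t}$ giving $K(1-t)\le 2<3$, is in fact a cleaner route than the paper's chain of inequalities, and it correctly preserves strictness. The genuine gap is the upper bound on $\lambda_{a,c}$, which is the heart of the theorem: you never establish it, and you yourself flag ``producing exactly the factor~$2$'' as an unresolved obstacle. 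Your proposed mechanism --- truncating $e^{kx'}$ before the opposite side and absorbing a cut-off error --- is both unnecessary and unlikely to deliver the stated constant. A genuine cut-off of width $\delta$ contributes a gradient term of order $\delta^{-1}e^{2\alpha L'\cot(\theta_\star/2)}$ times geometry-dependent factors, and it simultaneously shortens the boundary integral that produces the $(1-\exp)$ factor; there is no reason these reassemble into exactly $-2E\exp(2\alpha L'\cot(\theta_\star/2))$. More fundamentally, no truncation is needed at all: the form domain of the Robin Laplacian is all of $H^1(\Omega_{a,c})$ (trial functions satisfy no boundary condition), so the pure exponential $u_\star=\exp(\alpha x'/\sin(\theta_\star/2))$ restricted to the bounded triangle is admissible as is.

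The paper's factor $2$ arises from a different, elementary accounting that you should adopt: (i) the pointwise identity $|\nabla u_\star|^2 = \frac{\alpha^2}{\sin^2(\theta_\star/2)}\,|u_\star|^2$ makes the gradient part of the Rayleigh quotient exact; (ii) $\|u_\star\|^2_{L^2(\Omega_{a,c})}$ is \emph{overestimated} by the full-sector integral $\frac{\sin^2(\theta_\star/2)\tan(\theta_\star/2)}{2\alpha^2}$; (iii) $\|u_\star\|^2_{L^2(\partial\Omega_{a,c})}$ is \emph{underestimated} by integrating $u_\star^2 = \exp(2\alpha r\cot(\theta_\star/2))$ only over length $L'$ of each of the two sides adjacent to the smallest angle, which gives $-\frac{\tan(\theta_\star/2)}{\alpha}\big(1-\exp(2\alpha L'\cot(\theta_\star/2))\big)$. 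Since $\alpha<0$, combining (ii) and (iii) bounds the boundary part of the quotient by $-\frac{2\alpha^2}{\sin^2(\theta_\star/2)}\big(1-\exp(2\alpha L'\cot(\theta_\star/2))\big)$, and adding the gradient part yields exactly the right-hand side of \eqref{eq:condition}; the exponential thus comes from the finite length of the counted boundary, not from any truncation error. One further small point: in your $\theta_\star\to0$ limit at fixed $\alpha$ you treat $L'$ as fixed, but in that assertion the triangle varies under the area constraint, so you need $L'$ bounded below uniformly; the paper secures this via the triangle inequality and the isoperimetric property of the equilateral triangle, $L' \ge l(a)/4 > \frac{3}{2}\sqrt{S/\sqrt{3}}$, after which $\exp(2\alpha L'\cot(\theta_\star/2))\to0$ and the right-hand side of \eqref{eq:condition} indeed tends to $-\infty$. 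Your $\alpha\to-\infty$ asymptotics ($\sin^2(\theta_\star/2)<1/4$ for a non-equilateral triangle) is correct and agrees with the paper.
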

\begin{proof}
	We choose the vertex of the triangle
	corresponding to the angle $\theta_\star$ as the origin and introduce the orthogonal coordinate system $(x',y')$ so that the $x'$-axis coincides with the bisector line of the triangle~$\Omega_{a,c}$ emerging from its smallest angle. In this coordinate system we introduce the function
	\begin{equation}\label{eq:test_func}
		u_\star(x',y') := \exp\left(\frac{\alpha x'}{\sin(\theta_\star/2)}\right)
		.
	\end{equation}
Obviously, $u_\star \in H^1(\Omega_{a,c})$,
so it is an admissible trial function.	
	By a direct computation we get the estimates 
	\begin{equation}\label{eq:estimates}
	\begin{aligned}
		\|u_\star\|^2_{L^2(\Omega_{a,c})} 
		&\le \int_{-\frac{\theta_\star}{2}}^{\frac{\theta_\star}{2}}\int_0^{\infty}\exp\left(\frac{2\alpha r\cos\theta}{\sin(\theta_\star/2)} \right)r\der r \der \theta
		=
		\frac{\sin^2(\theta_\star/2)}{4\alpha^2}
		\int_{-\frac{\theta_\star}{2}}^{\frac{\theta_\star}{2}}\frac{1}{\cos^2\theta}\der\theta 
		\\
		&= \frac{\sin^2(\theta_\star/2)\tan(\theta_\star/2)}{2\alpha^2}\,, \\[0.3ex]
		\|u_\star\|^2_{L^2(\partial\Omega_{a,c})}
		&\ge
		2\int_0^{L'}\exp\left(
		2\alpha r\cot(\theta_\star/2)\right)\der r 
		\\
		&=
		-\frac{\tan(\theta_\star/2)}{\alpha}
		\Big(1-\exp\left(2\alpha L'\cot(\theta_\star/2)\right)\Big)\,.
	\end{aligned}	
	\end{equation}
	Moreover, we easily find that
	\begin{equation}\label{eq:norm_gradient}
		\|\nabla u_\star\|^2_{L^2(\Omega_{a,c})} = \frac{\alpha^2}{\sin^2(\theta_\star/2)}\|u_\star\|^2_{L^2(\Omega_{a,c})}\,.
	\end{equation}
	Combining~\eqref{eq:estimates} and~\eqref{eq:norm_gradient}, 
	we get from 
	variational characterisation~\eqref{variational.hat} with
    the trial function $u_\star$ that
	\begin{equation}\label{eq:lmac_bnd}	
	\begin{aligned}
		\lambda_{a,c}& \le
		\frac{\|\nabla u_\star\|^2_{L^2(\Omega_{a,c})}+ \alpha\|u_\star\|^2_{L^2(\partial\Omega_{a,c})}}{\|u_\star\|^2_{L^2(\Omega_{a,c})}}\\
			&\le 
		\frac{\alpha^2}{\sin^2(\theta_\star/2)}
		-
		\frac{2\alpha^2}{\sin^2(\theta_\star/2)}\big(1-\exp(2\alpha L'\cot(\theta_\star/2))\big)\\
		&=
		-
		\frac{\alpha^2}{\sin^2(\theta_\star/2)}\big(1-2\exp(2\alpha L'\cot(\theta_\star/2))\big).
	\end{aligned}	
	\end{equation}
	Relying on the analysis in Subsection~\ref{ssec:equilateral}, 
	for the equilateral triangle we have
	$\lambda_0= -\frac{4K^2}{\sqrt{3}S}$.
	Here the parameter $K$ can be estimated as follows
	\[
	\begin{aligned}
	K&= \arctanh\left(\mbox{$\sqrt{\sqrt{3}S}$}\dfrac{-\alpha}{K}\right)+ \arctanh\left(\mbox{$\sqrt{\sqrt{3}S}$}\dfrac{-\alpha}{2K}\right)\\
	&= \frac{1}{2}\ln\left(\frac{(1+t)(1+\frac{t}{2})}{(1-t)(1-\frac{t}{2})}\right)
	= \frac{1}{2}\ln\left( 1+\frac{6t}{t^2-3t+2}\right) 
	\\
	&< \frac{1}{2} \frac{6t}{t^2-3t+2}
	 < \frac{6}{t-2} +\frac{3}{1-t} < \frac{3}{1-t},\\
	\end{aligned}
	\]
	where we have used that $t=-\sqrt{\sqrt{3}S}\frac{\aa}{K}\in(0,1)$.
	Hence, we get that $K < 3-\alpha \sqrt{\sqrt{3}S}$ and thus
	\[
	\lambda_0 > -
	\frac{4(3-\alpha \sqrt{\sqrt{3}S})^2}{ \sqrt{3}S} = -4\alpha^2 + \frac{24\alpha}{\mbox{$\sqrt{\sqrt{3}S}$}}
	-\frac{36}{\sqrt{3}S}.
	\]
	The desired claim follows upon combination of 
	the above estimate with~\eqref{eq:lmac_bnd}.

	The first additional observation,
	that for a fixed triangle $\Omega_{a,c}$ the condition~\eqref{eq:condition} holds for all $|\alpha|$ large enough, follows from the fact that
	both the left- and the right-hand sides
	in~\eqref{eq:condition}
	tend to $-\infty$ as $\alpha\rightarrow-\infty$ 
	and their ratio tends to
	$4\sin^2(\theta_\star/2)\in(0,1)$.
	In order to verify the other additional observation, 
	it suffices to notice that by the triangle inequality and Proposition~\ref{isoperimetric}\,(i)
	we have $L' \ge l(a)/4 > \frac{3}{2}\sqrt{\frac{S}{\sqrt{3}}}$
	and hence for any fixed $\alpha < 0$ 
	the right-hand side  in~\eqref{eq:condition} tends to $-\infty$ as $\theta_\star\rightarrow 0$,
	 while the left-hand side is independent of $\theta_\star$. 
\end{proof}	

Theorem~\ref{Thm.global} from the introduction
is a special version of a combination of 
Theorems~\ref{Thm.small} and~\ref{thm:large}. 

\subsection{Numerical support}
We complement Theorem~\ref{thm:large}
by numerical computations in the special case $c = S = \frac{1}{\sqrt{3}}$. In order to perform these computations we require some extra analysis.
For $a \in[0,\frac{2}{\sqrt{3}}]$ the smallest angle of the triangle $\Omega_{a,\frac{1}{\sqrt{3}}}$ is at the vertex $(-\frac{1}{\sqrt{3}},0)$ while for $a > \frac{2}{\sqrt{3}}$ the smallest angle of this triangle is at the vertex $(a,1)$.
We will analyse these two cases separately.

Let $a\in[0,\frac{2}{\sqrt{3}}]$. Using elementary geometric arguments we find that
\[
	\sin\theta_\star = \frac{1}{\sqrt{1+\left(a+\frac{1}{\sqrt{3}}\right)^2}}.
\]
Hence, we derive with the aid of trigonometric identities that 
\begin{equation}\label{eq:cotsin}
	\sin^2\left(\tfrac{\theta_\star}{2}\right) = 
	\frac{\sqrt{1+\left(a+\frac{1}{\sqrt{3}}\right)^2}- \left(a+\frac{1}{\sqrt{3}}\right) }{2\sqrt{1+(a+\frac{1}{\sqrt{3}})^2}}\quad\text{and}\quad
	\cot\left(\tfrac{\theta_\star}{2}\right) = \sqrt{1+\left(a+\tfrac{1}{\sqrt{3}}\right)^2}+a+\tfrac{1}{\sqrt{3}}.
\end{equation}
Moreover, the length of the shorter side of the triangle $\Omega_{a,\frac{1}{\sqrt{3}}}$ adjacent
to its smallest angle is $L'= \frac{2}{\sqrt{3}}$.

Let $a \in(\frac{2}{\sqrt{3}},\infty)$.
Again using elementary geometric arguments we find that
\[
	\sin\theta_\star = \frac{2}{\sqrt{3a^4+4a^2+\frac{16}{3}}}
\]
and derive
\[
\sin^2\left(\tfrac{\theta_\star}{2}\right) = 
\frac{1-\sqrt{\frac{9a^4+12a^2+4}{9a^4+12a^2+16}}}{2}\quad\text{and}\quad
\cot\left(\tfrac{\theta_\star}{2}\right) = \sqrt{
\frac{2}{1-\sqrt{\frac{9a^4+12a^2+4}{9a^4+12a^2+16}}}-1}
\,.
\]
Moreover, the length of the shorter side of the triangle $\Omega_{a,\frac{1}{\sqrt{3}}}$ adjacent
to its smallest angle is \[
L'= \sqrt{1+\left(a-\tfrac{1}{\sqrt{3}}\right)^2}.
\]
Using the above analysis of the two cases
$a\in[0,\frac{2}{\sqrt{3}}]$ and $a > \frac{2}{\sqrt{3}}$ we plot in Figure~\ref{fig:cond} the region where the condition~\eqref{eq:condition} is satisfied.
\begin{figure}[h!]
	\begin{center}
		\includegraphics[width=0.45\textwidth]{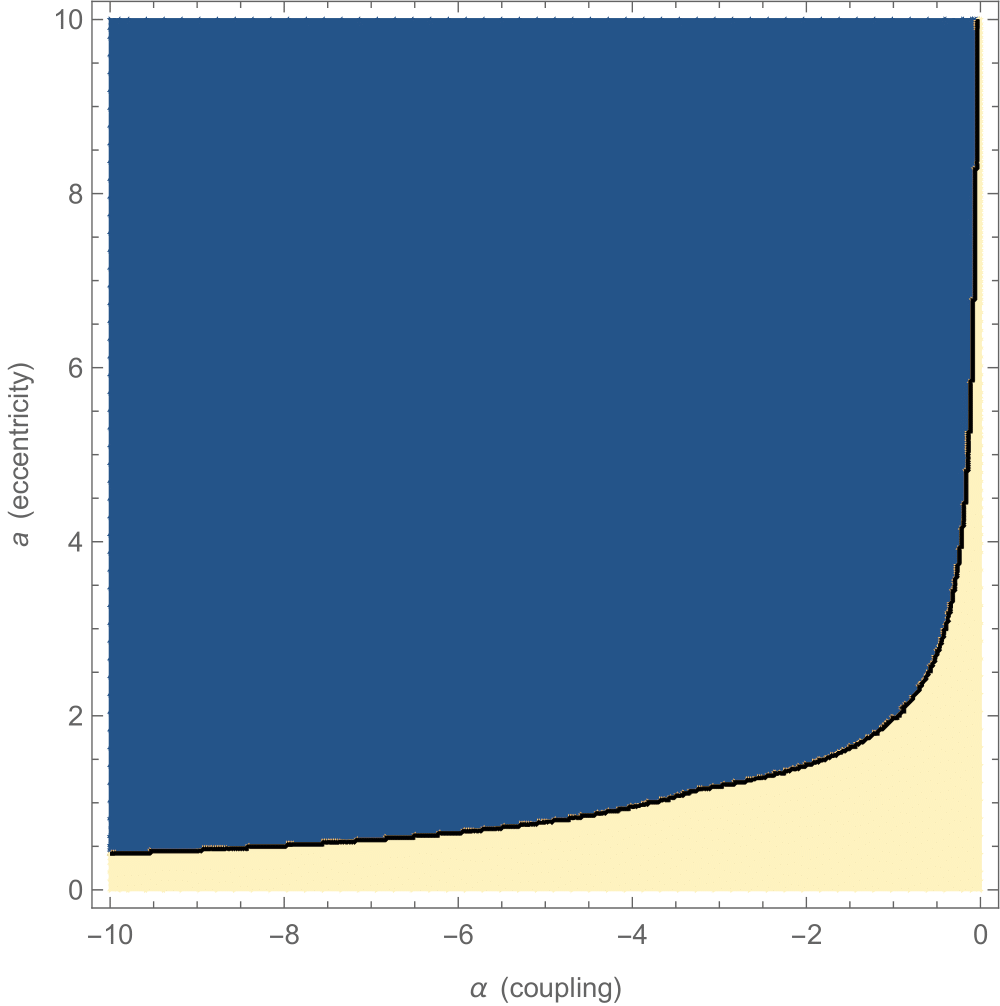}
		\caption{\small 
			The blue colour indicates the region 
			where the sufficient condition~\eqref{eq:condition}
			for validity of Conjecture~\ref{Conj.triangle} is satisfied.}
		\label{fig:cond}
	\end{center}
\end{figure}

Furthermore, we find numerically the region where the test function $u_\star$ defined in~\eqref{eq:test_func} yields the inequality in Conjecture~\ref{Conj.triangle}. In this analysis we make a simplification and construct the function $u_\star$ always based
on the angle at the vertex $(-\frac{1}{\sqrt{3}},0)$ even though this angle is not the smallest angle of the triangle $\Omega_{a,\frac{1}{\sqrt{3}}}$ for $a > \frac{2}{\sqrt{3}}$. As we will see from the numerical plot even after such a simplification we still obtain a large region of validity for Conjecture~\ref{Conj.triangle}.

In order to perform this numerical test it is convenient to express the function $u_\star$ as a function of initial coordinates $(x,y)$. Clearly, the distance between the points $(x,y)$ and $(-\frac{1}{\sqrt{3}},0)$ is given by
\[
	r = \sqrt{\left(x+\frac{1}{\sqrt{3}}\right)^2+y^2}.
\]
Let $\theta'$ be the magnitude of the angle formed by the vertices $(x,y)$, $(-\frac{1}{\sqrt{3}},0)$, and $(\frac{1}{\sqrt{3}},0)$. We immediately find that
\[
	\sin\theta' = \frac{y}{r},\qquad\cos\theta' = \frac{x+\tfrac{1}{\sqrt{3}}}{r}.
\]
Hence, we obtain using the expression for $\cot(\frac{\theta_\star}{2})$ in~\eqref{eq:cotsin} that
\[
\begin{aligned}
	\frac{x'}{\sin\left(\frac{\theta_\star}{2}\right)}& = 
	\frac{r\cos\left(\frac{\theta_\star}{2}-\theta'\right)}{\sin\left(\frac{\theta_\star}{2}\right)} = \left(x+\tfrac{1}{\sqrt{3}}\right)\cot\left(\frac{\theta_\star}{2}\right) + y\\
	&= 
	\left(x+\tfrac{1}{\sqrt{3}}\right)
	\left(\sqrt{1+\left(a+\tfrac{1}{\sqrt{3}}\right)^2}+a+\tfrac{1}{\sqrt{3}}\right) + y.
\end{aligned}
\]
Finally, we conclude that 
\begin{equation}\label{eq:ustar}
	u_\star(x,y) = \exp\left(
	\alpha\left[	\left(x+\tfrac{1}{\sqrt{3}}\right)
	\left(\sqrt{1+\left(a+\tfrac{1}{\sqrt{3}}\right)^2}+a+\tfrac{1}{\sqrt{3}}\right) + y\right]
	\right).
\end{equation}
\color{black}
\begin{figure}[h!]
\begin{center}
\includegraphics[width=0.45\textwidth]{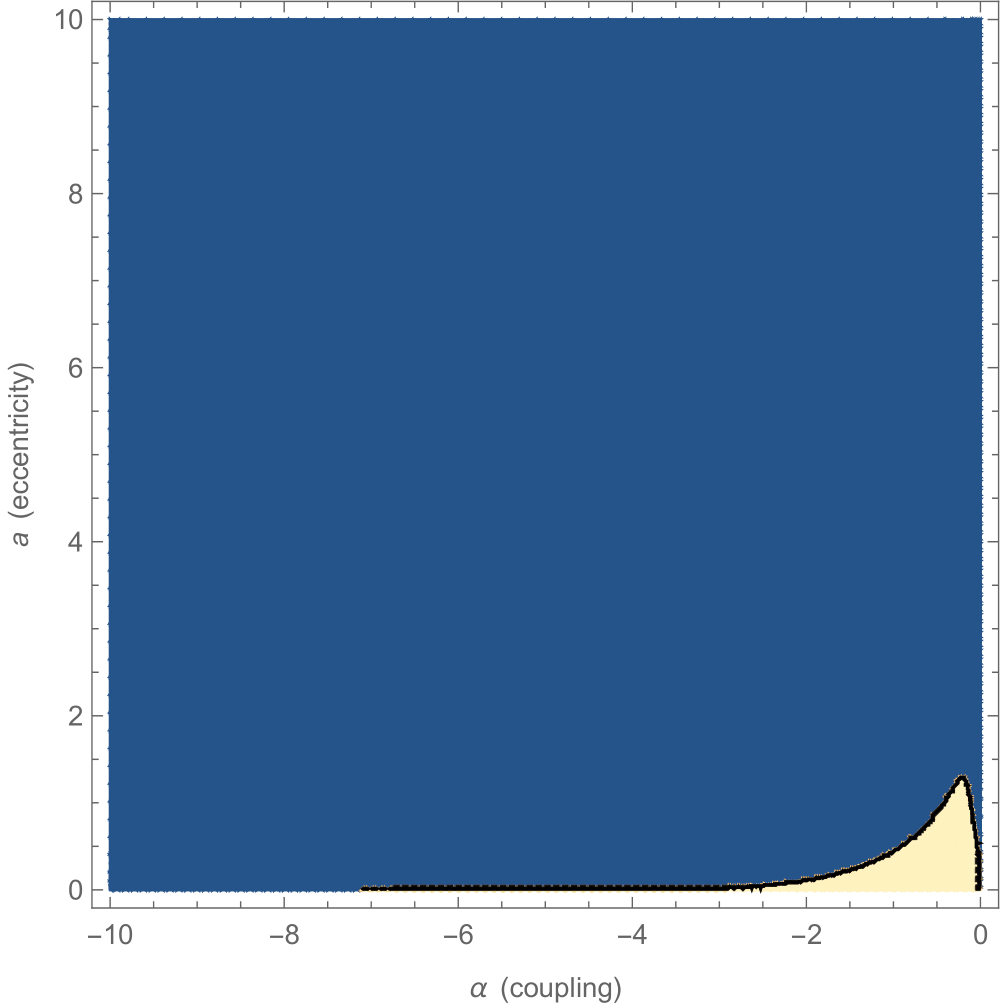}
\caption{\small 
	The blue colour indicates the region 
	of validity of Conjecture~\ref{Conj.triangle}: \newline 
the function~$u_\star$ in~\eqref{eq:ustar} as a trial function.}
	\label{Fig.test-wrong1}
\end{center}
\end{figure}
Figure~\ref{Fig.test-wrong1} plots the region where the difference
$h_\alpha[u_\star]/\|u_\star\|_{\sii(\Omega_{a,
		\frac{1}{\sqrt{3}}})}^2 
- \lambda_0(\alpha)$ is non-positive.
From this plot we see that
the test function $u_\star$
suffices to show that the inequality in Conjecture~\ref{Conj.triangle} holds for any $\aa < 0$ and all $a >0$ not too small and for any $a > 0$ and $\aa < 0$ with sufficiently large $|\aa|$. Moreover, according to this plot,
one can show using the trial function $u_\star$ that there exists $a_\star > 0$ such that the inequality in Conjecture~\ref{Conj.triangle} holds for any $\aa <0$ and all $a >a_\star$.

\subsection*{Acknowledgement}
The authors would like to express their gratitude 
to the American Institute of Mathematics (AIM) 
for a support to organise the workshop 
\emph{Shape optimization with surface interactions}
(San Jose, USA, 17--21 June 2019),
which stimulated the present research.
The first and last authors (D.K. and T.V.)
were partially supported by the EXPRO grant 
No. 20-17749X of the Czech Science Foundation (GA\v{C}R).
The second author (V.L.) acknowledges the support by 
the grant No.~21-07129S of the Czech Science Foundation (GA\v{C}R).
%


\end{document}